\def\house#1{\setbox1=\hbox{$\,#1\,$}%
	\dimen1=\ht1 \advance\dimen1 by 2pt \dimen2=\dp1 \advance\dimen2 by 2pt
	\setbox1=\hbox{\vrule height\dimen1 depth\dimen2\box1\vrule}%
	\setbox1=\vbox{\hrule\box1}%
	\advance\dimen1 by .4pt \ht1=\dimen1
	\advance\dimen2 by .4pt \dp1=\dimen2 \box1\relax}
\newtheorem{theorem}{Theorem}
\newtheorem*{theorem*}{Theorem}
\newtheorem{lemma}{Lemma}
\newtheorem{conjecture}{Conjecture}
\newtheorem*{acknowledgements*}{Acknowledgements}
\newtheorem{remark}{Remark}
\newcommand{\ve}{\varepsilon}
\newcommand{\mcal}{\mathcal}
\newcommand{\mbb}{\mathbb}
\newcommand\blfootnote[1]{
	\begingroup
	\renewcommand\thefootnote{}\footnote{#1}
	\endgroup
}
\begin{document}
\title[Large values of quadratic Dirichlet $L$-functions]{Large values of quadratic Dirichlet $L$-functions}
\author{Pranendu Darbar and Gopal Maiti }
\address[Pranendu Darbar]{Max Planck Institute of Mathematics\\ Vivatsgasse 7, 53111 Bonn}
\email{darbar@mpim-bonn.mpg.de}

\address[Gopal Maiti]{Max Planck Institute of Mathematics\\ Vivatsgasse 7, 53111 Bonn}
\email{maiti@mpim-bonn.mpg.de}

\keywords{Dirichlet $L$-functions, Large values, Resonance method, Character sum}

 \begin{abstract}
Assuming the Generalized Riemann Hypothesis (GRH), we utilize the long resonator method to derive $\Omega$-results for the family of quadratic Dirichlet $L$-functions $L(\sigma, \chi_d)$, where $d$ runs over all fundamental discriminants with $|d| \leq X$ and $\sigma\in [1/2, 1]$ is fixed. This study advances understanding of the maximum size of $L(\sigma, \chi_d)$ within the segment $\sigma\in [1/2, 1]$. In particular, we improve upon Soundararajan's results at the central point and provide a lower bound on the proportion of fundamental discriminants, uniformly within an expected order of magnitude, up to optimal values of the constant for a fixed $\sigma \in (1/2, 1]$.
 \end{abstract}

\blfootnote{2010 {\it Mathematics Subject Classification}: 11A05, 11L40, 11M06, 11M20, 11N37.}

\maketitle
  
\section{Introduction}

The understanding of maximum size of $L$-functions poses a profound challenge in number theory. Significant advancements have been made in recent years, particularly regarding the Riemann zeta function and Dirichlet $L$-functions in the conductor aspect. After the work of Balasubramanian and Ramachandra \cite{BR} employing the moment method for the Riemann zeta function, Soundararajan \cite{Sound} applied a more versatile approach called the ``resonance method". The resonance method reframes the problem as an optimization problem of a quadratic form applicable to a broad family of $L$-functions. Notably, for the Riemann zeta function, Soundararajan \cite[Theorem 1]{Sound} established that
\[
\max_{T\leq t\leq 2T}|\zeta(1/2+it)|\geq \exp\left((1+o(1))\sqrt{\frac{\log T}{\log_2 T}}\right).
\]

Using the long resonators introduced in \cite{Ais}, Bondarenko and Seip \cite{BS} achieved significant advancements in $\Omega$-results for the Riemann zeta function on the critical line, where they accomplished this feat by estimating a specific type of greatest common divisor (GCD) sum. The efficacy of their long resonator technique extends beyond the critical line, encompassing the critical strip for both the Riemann zeta function and Dirichlet $L$-functions in the conductor aspect are explored in \cite{AMM, AMMP, BS3, Yang}.
In fact, Bondarenko and Seip \cite[Theorem 1]{BS} proved that
\[
\max_{0\leq t\leq T}|\zeta(1/2+it)|\geq \exp\left((1+o(1))\sqrt{\frac{\log T \log_3 T}{\log_2 T}}\right),
\]
a result that was later refined by de la Bret\`{e}che and Tenenbaum \cite{BT}, who enhanced the exponent by replacing $1$ with $\sqrt{2}$ through the optimization of a full GCD-sum problem. Furthermore, for Dirichlet $L$-functions, de la Bret\`{e}che and Tenenbaum \cite[Theorem 1.5]{BT} showed that 
\begin{align*}\label{large values of l functions}
\max_{\substack{\chi\in X_q^+\\ \chi \neq \chi_0}} |L(1/2, \chi)|\geq \exp\left((1+o(1))\sqrt{\frac{\log q \log_3 q}{\log_2 q}}\right),
\end{align*}
where $X_q^+$ denotes the set of all even characters modulo $q$.

In these families, key characteristics include full orthogonality, positive coefficients of both the resonator and $L$-functions, and a connection to the GCD sum, which constitute essential components of the long resonator technique. Though Soundararajan's resonator technique can be extended to a broader class of $L$-functions and Dirichlet polynomials (refer to \cite{AP, XY} for further details). The maximum sizes of $L$-functions over different families were conjectured by Farmer, Gonek and Hughes \cite{FGH}. 
One particularly intriguing example is the family of quadratic Dirichlet $L$-functions, parameterized by fundamental discriminants. For $\sigma \in [1/2, 1]$, this family is represented as
\[
\{L(\sigma, \chi_d): d \text{ is fundamental discriminant}\}.
\]
These $L$-functions feature coefficients that are not necessarily positive and do not exhibit full orthogonality, making the conventional use of long resonators challenging, as discussed by C. Aistleitner et al. in \cite{AMMP}. This question was initially posed in \cite{AMMP} following Theorem 1.2, and numerous attempts have been made to address it over the years.

In our previous work \cite{DM}, we established an $\Omega$-result of Bondarenko--Seip type for the quadratic Dirichlet $L$-function $L(1/2, \chi_P)$ over irreducible polynomials $P$ associated with hyperelliptic curves of genus $g$ over a fixed finite field $\mathbb{F}_q$ in the large genus limit. These $L$-functions over function fields constituted the first family deviating from the full orthogonality property, where we focused on irreducible polynomials to avoid certain obstacles.

The main objective of this paper is to investigate the maximum sizes of $\{L(\sigma, \chi_d): |d|\leq X\}$ for any fixed $\sigma\in [1/2, 1]$. Additionally, for $\sigma \in (1/2, 1]$, we aim to evaluate the proportion of fundamental discriminants that fall near the expected order of magnitude up to the precise values of constants.

Throughout this paper, $\sum^{\flat}$ and $\log_j$ represent the summation over fundamental discriminants and the $j$-th iterated natural logarithm, respectively. The notation $\square$ indicates a perfect square. Also, $\varepsilon$ is an arbitrarily small positive quantity which is not necessarily the same at each occurrence. Let $\mbb{N}$ be the set of natural numbers and $\mbb{P}$ be the set of primes.

\subsection{Large values of $L(1/2, \chi_d)$}
 Soundararajan \cite[Theorem 2]{Sound} showed that 
\[
\max_{\substack{d \text{ fundamental discriminant }\\X<|d|\leq 2X}}|L(1/2, \chi_d)|\geq \exp\left(\left(1/\sqrt{5}+o(1)\right)\sqrt{\frac{\log X }{\log_2 X}}\right).
\]

Note the typo in the statement of Theorem 2 in \cite{Sound}. In this article, we use the long resonator technique of Bondarenko--Seip to study the large value of $L(1/2, \chi_d)$  under the Generalized Riemann Hypothesis (GRH), estimating character sums effectively to absorb long resonators within them. Our approach yields the following theorem, improving Soundararajan's above result.    
\begin{theorem}\label{main theorem 1}
 For sufficiently large $X$, under GRH, we have
	\[
	\max_{\substack{d \text{ fundamental discriminant }\\X<|d|\le 2X}}|L(1/2, \chi_{d})|\geq \exp\left(\left(\frac 1 2 +o(1) \right)\sqrt{\frac{\log X \log_3 X}{\log_2 X}}\right).
	\]
\end{theorem}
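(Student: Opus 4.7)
The plan is to adapt the long resonator method of Bondarenko--Seip to the family $\{L(1/2,\chi_{d})\}$, using GRH-based character sum estimates (specifically, $\sum^{\flat}_{|d|\le X}\chi_{d}(\ell)\ll X^{1/2}\ell^{\varepsilon}$ for $\ell$ not a perfect square) to compensate for the absence of full orthogonality noted in \cite{AMMP}. Concretely, I will introduce a resonator $R(d)=\sum_{m\in\mathcal{M}}r(m)\chi_{d}(m)$ and use
\[
\max_{X<|d|\le 2X}|L(1/2,\chi_{d})|\ \ge\ \frac{\bigl|\sum^{\flat}_{X<|d|\le 2X}L(1/2,\chi_{d})R(d)^{2}\bigr|}{\sum^{\flat}_{X<|d|\le 2X}R(d)^{2}}\ =:\ \frac{M_{1}}{M_{2}}.
\]
Following Bondarenko--Seip, $\mathcal{M}$ will consist of squarefree integers $m\le X^{\theta}$ with $\omega(m)=k\asymp\sqrt{\log X\log_{3}X/\log_{2}X}$ and all prime factors in $\mathcal{P}=[\log X\,(\log_{2}X)^{\alpha},\exp((\log_{2}X)^{\beta})]$, for parameters $\alpha,\beta>0$ and $\theta<1/2$ dictated by GRH. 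The weight is multiplicative, $r(m)=\prod_{p\mid m}f(p)/\sqrt{p}$, with a profile $f$ to be optimised at the end. Since $\chi_{d}$ is real, $R(d)^{2}$ is a clean double sum.

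Expanding the denominator, $M_{2}=\sum_{m,n}r(m)r(n)\sum^{\flat}\chi_{d}(mn)$; because $m,n$ are squarefree with primes in $\mathcal{P}$, the condition $mn=\square$ forces $m=n$, producing a diagonal main term of order $X\sum_{m\in\mathcal{M}}r(m)^{2}\prod_{p\mid m}(1+1/p)^{-1}$, while the off-diagonal terms are absorbed by the GRH bound thanks to the cap $m\le X^{\theta}$. For the numerator I would substitute the GRH-conditional approximate functional equation
\[
L(1/2,\chi_{d})=2\sum_{\ell\ge 1}\frac{\chi_{d}(\ell)}{\sqrt{\ell}}\,V\!\left(\frac{\ell}{\sqrt{|d|}}\right),
\]
so that $M_{1}$ involves $\sum^{\flat}\chi_{d}(\ell mn)$. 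The main contribution arises from $\ell mn=\square$: writing $m=em'$, $n=en'$ with $(m',n')=1$, this forces $\ell=m'n'j^{2}$ for some $j\ge 1$, and gives
\[
M_{1}\approx c\,X\sum_{j\ge 1}\frac{1}{j}\sum_{m,n\in\mathcal{M}}\frac{r(m)r(n)(m,n)}{\sqrt{mn}}\,V\!\left(\frac{m'n'j^{2}}{\sqrt{X}}\right)\cdot(\text{Euler corrections}),
\]
where the $j$-sum contributes a factor $\sim\tfrac14\log X$ coming from the cutoff $j^{2}m'n'\lesssim X^{1/2}$.

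Taking the ratio and carrying out the Euler-product bookkeeping, $M_{1}/M_{2}$ collapses into $c\log X$ times $\prod_{p\in\mathcal{P}}\bigl(1+2f(p)/p+f(p)^{2}/p\bigr)\bigl(1+f(p)^{2}/p\bigr)^{-1}$, whose logarithm is essentially $\sum_{p\in\mathcal{P}}2f(p)/p$ when $f(p)/\sqrt{p}$ is small. Optimising $f$ and the range of $\mathcal{P}$ in the manner of Bondarenko--Seip (and de la Bret\`{e}che--Tenenbaum), subject to the AFE-imposed size restriction $m\le X^{\theta}$ with $\theta<1/2$, yields
\[
\log\frac{M_{1}}{M_{2}}\ \ge\ \Bigl(\tfrac{1}{2}+o(1)\Bigr)\sqrt{\frac{\log X\log_{3}X}{\log_{2}X}},
\]
the constant $1/2$ (as against the Bondarenko--Seip constant $1$ for $\zeta$) encoding the fact that the AFE effective length is $\sqrt{|d|}$ rather than $|d|$.

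The principal obstacle is the numerator analysis: the quadratic family is not fully orthogonal, so the diagonal condition $\ell mn=\square$ yields the nested $e,m',n',j$-structure above rather than a clean diagonal. Turning this into an Euler product aligned with the Bondarenko--Seip denominator, while simultaneously verifying that the GRH off-diagonal bound absorbs a resonator of length $X^{\theta}$ with $\theta$ as close to $1/2$ as possible, is the delicate core of the argument and is what pins down the constant $1/2$. A secondary issue is ensuring the restriction $\omega(m)=k$ and the precise interval $\mathcal{P}$ are compatible — after Cauchy--Schwarz — with the main-term size needed to beat Soundararajan's $1/\sqrt{5}$.
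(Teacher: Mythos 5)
Your overall framework — a resonance quotient $M_1/M_2$ with a Bondarenko--Seip style resonator and a GRH-conditional quadratic character sum bound — is the right one and is indeed what the paper does. However, there is a genuine gap in the error control, and it is precisely the point where the paper's main technical input is needed.

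You propose to absorb the off-diagonal terms with the bound $\sum^{\flat}_{|d|\le X}\chi_d(\ell)\ll X^{1/2}\ell^{\varepsilon}$, and you explicitly impose $m\le X^{\theta}$ on the resonator support. Both of these are incompatible with the Bondarenko--Seip construction. In that construction (the paper's Lemma 3), the set $\mathcal{R}$ has \emph{cardinality} $\le N$, but an individual $n\in\mathcal{R}$ can be as large as $\exp\bigl(\log N\log_2 N/\log_3 N\bigr)$, which exceeds $N^A$ for every fixed $A$ once $X$ is large. So the cap $m\le X^{\theta}$ would force you back to a short (Soundararajan-type) resonator and you would lose the $\log_3 X$ factor entirely; and if you drop the cap, the bound $\ell^{\varepsilon}=\exp(\varepsilon\log \ell)$ applied to $\ell=lmn$ contributes a factor $\exp\bigl(c\varepsilon\log N\log_2 N/\log_3 N\bigr)$, which dwarfs any saving of the form $X^{1/2-\delta}$. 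The naive GRH character-sum bound is therefore not enough: it does not ``absorb a resonator of length $X^{\theta}$'' because length-of-support and size-of-entries are decoupled here, and it is the latter that the $\ell^{\varepsilon}$ bound cannot handle.

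The paper's substitute is its Lemma 1: under GRH, the error in $\sum^{\flat}_{|d|\le X}\chi_d(n)$ (for $n=n_0n_1^2$ with $n_0$ squarefree and non-square) is $O\bigl(X^{1/2+\varepsilon}f(n_0)g(n_1)\bigr)$ with $f(n_0)=\exp\bigl((\log n_0)^{1-\varepsilon}\bigr)$ and $g(n_1)=\sum_{b\mid n_1}\mu^2(b)b^{-1/2-\varepsilon}$. The \emph{subpolynomial} dependence $\exp\bigl((\log n_0)^{1-\varepsilon}\bigr)$, obtained by shifting a Perron integral into the zero-free strip and using the Granville--Soundararajan $\log L$ approximation, is exactly what is needed: for the largest $n_0\in\mathcal{R}$ one gets $f(n_0)\ll N^{\varepsilon/2}$, and Cauchy--Schwarz then reduces $\sum_{m\in\mathcal{R}}\psi(m)g(m)$ to $|\mathcal{R}|^{1/2}\bigl(\sum\psi^2\bigr)^{1/2}N^{\varepsilon/2}$, at which point $|\mathcal{R}|\le N\asymp X^{1/4-\varepsilon}$ makes everything fit under $X^{1-\varepsilon}$. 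Without this refinement the scheme breaks at exactly the step you flag as the ``delicate core.'' Two smaller points: the paper's $\mathcal{R}$ bounds $\omega(m)$ on each dyadic block $\mathcal{P}_k$ rather than fixing $\omega(m)=k$ globally; and rather than disentangling the full condition $\ell mn=\square$ with the nested $e,m',n',j$-structure you propose, the paper uses the positivity of the AFE weight $U$ and of $\psi$ to retain only the clean subdiagonal $\ell m=n$ with $\ell\mid n$, which already yields the needed $\mathcal{A}_N$ and is considerably simpler.
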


\subsection{Large values of $L(1, \chi_d)$}
The distribution of $L(1, \chi_d)$ offers an intriguing connection to various arithmetic entities, such as the class number of related quadratic number field. Granville--Soundararajan studied the distribution of values of $L(1, \chi_d)$ and established \cite[Theorem 4]{GS} that the function $\Phi_X(\tau)$, the proportion of fundamental discriminants $d$ satisfying $|d|\leq X$ with $L(1, \chi_d)\geq e^{\gamma}\tau$ has the following doubly exponentially decaying:
\begin{align}\label{gs decay}
\Phi_X(\tau)=\exp\left(-\frac{e^{\tau-C_1}}{\tau}\left(1+O\left(\frac{1}{A}+\frac{1}{\tau}\right)\right)\right)
\end{align}
 uniformly in the range $\tau\leq \log_2 X+\log_4 X-\log_2 A-20$ unconditionally and $\log_2 X+\log_3 X-\log_2 A-20$ under GRH, where $2\leq A\leq \log_2 X$ and $C_1$ is given by \eqref{c1}. This result gives an asymptotic formula for $\Phi_X(\tau)$ in \eqref{gs decay} if $A = A(X)$ is chosen as an arbitrary function of $X$ such that $A(X)\to \infty$ as $X \to \infty$. In \cite{GS}, the authors' studied the distribution of $L(1, \chi_d)$ using a probabilistic model $L(1, Y)$ for a suitable random variable $Y$ and solved one of the conjectures posed by Montgomery--Vaughan \cite[Conjecture 1]{MV}. 
 
 Moreover, Granville and Soundararajan predicted that if the asymptotic formula \eqref{gs decay} holds to the edge of the viable range, i.e., $\tau=\log_2 X+\log_3 X+C_1$ then 
 \[
 \max_{\substack{d \text{ fundamental discriminant }\\|d|\leq X}}L(1, \chi_d)=e^{\gamma}\left(\log_2 X+\log_3 X+C_1+o(1)\right),
 \]
 where $C_1$ is derived from the underlying probabilistic model, specifically
 \begin{align}\label{c1}
 C_1=\int_{0}^1 \left(\tanh y\right)\frac{dy}{y}+\int_1^\infty (\tanh y-1)\frac{dy}{y}=0.8187\ldots.
 \end{align}
 
 Montgomery--Vaughan \cite[Conjecture 2]{MV} also proposed the following conjecture.
\begin{conjecture}[Montgomery--Vaughan]\label{MV con} The number of fundamental discriminants $|d|\leq X$ with $L(1, \chi_d)\geq e^{\gamma}\left(\log_2 |d|+ \log_3 |d|\right)$ is $>X^{\theta}$ and $<X^{\Theta}$, where $0<\theta< \Theta<1$.
	\end{conjecture}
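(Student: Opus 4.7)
The plan is to attack the two halves of the conjecture by separate routes: the upper bound $\ll X^\Theta$ via a high-moment extension of Granville--Soundararajan's distribution formula \eqref{gs decay} up to the threshold $\tau=\log_2|d|+\log_3|d|$, and the lower bound $\gg X^\theta$ via a Bondarenko--Seip style long resonator construction under GRH. The predicted optimal exponent on both sides is $\theta=\Theta=1-e^{-C_1}$, dictated by the random Euler product model at the edge of the viable range.

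For the upper bound I would begin by using, under GRH, the short Dirichlet polynomial approximation $\log L(1,\chi_d)=\sum_{p\le y}\chi_d(p)/p+O(1)$ for $y$ a small power of $\log X$. Computing the $2k$-th moment of this prime sum over fundamental discriminants via the orthogonality relation for $\sum^\flat_{|d|\le X}\chi_d(n)$ (main term supported on $n=\square$) and balancing $k\asymp e^{\tau}/\tau$, one recovers the random-model moments. Markov's inequality then yields
\[
\Phi_X(\tau)\ll \exp\!\left(-\frac{e^{\tau-C_1}}{\tau}(1+o(1))\right)
\]
uniformly up to $\tau=\log_2 X+\log_3 X$, giving a count $\ll X^{1-e^{-C_1}+o(1)}$ and hence any $\Theta>1-e^{-C_1}$.

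For the lower bound I would introduce a long resonator
\[
R(d)=\sum_{n\in\mathcal{N}}f(n)\chi_d(n),
\]
with $\mathcal{N}$ a multiplicatively generated set of squarefree integers built from primes in a window $[2,y]$, and weights $f(n)>0$ chosen so that $|R(d)|^2$ is large precisely on those $d$ for which $\chi_d(p)=+1$ holds for most $p\le y$. The ratio $\sum^\flat L(1,\chi_d)|R(d)|^2/\sum^\flat|R(d)|^2$ would be evaluated through the diagonal $mn\ell=\square$ contribution in the character-sum expansion of $L(1,\chi_d)$, yielding the target size $e^\gamma(\log_2 X+\log_3 X)(1+o(1))$. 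A Paley--Zygmund type second-moment argument, with $\sum^\flat L(1,\chi_d)^2|R(d)|^2$ controlled under GRH via a short Dirichlet polynomial approximation, would then convert this mean estimate into a lower bound $\gg X^\theta$ for the number of discriminants exceeding the threshold.

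The principal obstacle is controlling the off-diagonal terms in the first-moment calculation: unlike the $\chi$-orthogonal settings of \cite{BS,AMMP,BT}, here the dual variable is the integer $d$ itself and the character sum $\sum^\flat_{|d|\le X}\chi_d(k)$ for non-square $k$ enjoys only a polynomial saving, so the resonator cannot be taken nearly as long as in the conductor-aspect arguments. This is the obstacle flagged in \cite{AMMP}, and reaching the conjectured exponent $1-e^{-C_1}$ will likely require restricting $\mathcal{N}$ to products of primes $p\equiv 1\pmod 4$ (so that by quadratic reciprocity $\chi_d(p)$ depends only on $d\pmod p$) and feeding the off-diagonal terms into a Heath-Brown quadratic large sieve. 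Matching the precise constant $C_1$ of the probabilistic model appears to demand an asymptotic analysis of the resulting quadratic form at the very edge of the viable range that goes beyond current resonance technology, and is the place where a genuinely new input would be needed.
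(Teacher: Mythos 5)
This ``statement'' is an open conjecture, not a theorem that the paper proves. The paper only records the state of the art: Granville--Soundararajan's Theorem 4 (under GRH) yields the upper bound direction, while the lower bound direction remains open. The paper's own contribution (Theorems~\ref{main theorem 2} and~\ref{main theorem 2a}) is a resonance-method lower bound at the weaker threshold $e^{\gamma}(\log_2 X+\log_3 X-C_2-\eta)$ with $C_2\approx 0.456>0$, which does \emph{not} reach the threshold $e^{\gamma}(\log_2|d|+\log_3|d|)$ required by Montgomery--Vaughan.

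Your sketch of the upper bound is broadly the Granville--Soundararajan route and is fine as far as it goes. The lower bound half, however, overclaims. You assert that evaluating the ratio $\sum^\flat L(1,\chi_d)|R(d)|^2/\sum^\flat|R(d)|^2$ would yield the ``target size $e^\gamma(\log_2 X+\log_3 X)(1+o(1))$,'' but this is precisely what fails: because the character sum $\sideset{}{^\flat}\sum_{|d|\le X}\chi_d(k)$ over non-square $k$ saves only a power $X^{1/2}$ (Lemma~\ref{character sum estimate}), the resonator length is constrained to $z=\tfrac1c\log X\log_2 X$ with $c>2(3\log2-\pi/2)$, which produces exactly the deficit $-C_2$ in the threshold. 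Thus the resonance argument, without new input, proves a strictly weaker statement than the conjecture's lower bound. Your closing remark that ``a genuinely new input would be needed'' at the constant level is correct, but it should be foregrounded as the reason the lower bound is still a conjecture, not presented as an optimization issue at the end of an otherwise complete proof. Also note, for the quantification step, the paper does not use a Paley--Zygmund second-moment argument: Theorem~\ref{main theorem 2a} is deduced from a first-moment identity combined with Littlewood's conditional $L^\infty$ bound $|L(1,\chi_d)|\le(2+o(1))e^\gamma\log_2 X$ and the pointwise bound on $|R_d|$; no second moment $\sum^\flat L(1,\chi_d)^2|R(d)|^2$ is computed.
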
 
Assuming the GRH, Granville and Soundararajan proved the upper bound part of the Conjecture \ref{MV con} as a consequence of \cite[Theorem 4]{GS}. Also, the authors' \cite[Theorem 5a]{GS} provided a lower bound towards the conjecture by showing that there are $\gg X^{1/2}$ primes $\leq X$ such that 
\begin{align}\label{gs under primes}
L(1, \chi_p)\geq e^{\gamma}\left(\log_2 X +\log_3 X -\log(2\log 2)+o(1)\right).
\end{align}
   
  In this paper, we employ a long resonator technique to show the following ``existence" and ``proportion" of large values of $|L (1, \chi_d)|$, as $d$ varies over all fundamental discriminants in a range $|d|\leq X$. 
 \begin{theorem}\label{main theorem 2}
 Let $X$ be sufficiently large, and suppose GRH holds. Then there exists a fundamental discriminant $d$ with $|d|\leq X$ such that
 	\[
 |L(1, \chi_{d})|\geq e^{\gamma}\left(\log_2 X +\log_3 X-C_2+o(1)\right),
 \]
 where
 \[
 C_2=\frac{\pi}{4}+\frac{\log 2}{2}+\log\left(3\log 2-\frac{\pi}{2}\right)\approx 0.455967.
 \]
 \end{theorem}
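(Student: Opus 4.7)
The strategy is to apply the Bondarenko--Seip long resonator method (as used in the proof of Theorem~\ref{main theorem 1}) to the family $\{\chi_d\}$ of quadratic characters at $s=1$, with careful enough bookkeeping to recover the explicit constant $C_2$. Under GRH, one has the truncated Euler product approximation
\[
L(1,\chi_d) = \prod_{p\leq y}\!\left(1-\frac{\chi_d(p)}{p}\right)^{-1}\!\bigl(1+o(1)\bigr)
\]
uniformly for $|d|\leq X$, with $y$ in an appropriate range. I would introduce a nonnegative resonator
\[
R(d):=\sum_{m\in\mcal{M}}r(m)\chi_d(m),
\]
where $\mcal{M}$ is a set of squarefree integers supported on primes in a dyadic range $[P,Q]$, and $r$ is a multiplicative weight to be chosen. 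Since $L(1,\chi_d)>0$ for every fundamental discriminant, the basic inequality
\[
\max_{|d|\leq X}|L(1,\chi_d)|\;\geq\; \frac{\sum_d^{\flat}\Phi(|d|/X)\,R(d)^2\,L(1,\chi_d)}{\sum_d^{\flat}\Phi(|d|/X)\,R(d)^2}
\]
for a smooth cutoff $\Phi\geq 0$ reduces the problem to an asymptotic evaluation of both sums.

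The evaluation would rely on the standard character sum asymptotic
\[
\sum_d^{\flat}\Phi(|d|/X)\chi_d(n) = c_{\Phi}\,X\,g(n)\,\mathbf{1}_{n=\square}+\text{error},
\]
for an explicit multiplicative factor $g$. In the denominator, $\chi_d(m_1m_2)$ contributes only when $m_1m_2=\square$, which, by squarefreeness of $\mcal{M}$, forces $m_1=m_2$ and yields a diagonal Euler-product expression. For the numerator, expanding the truncated Euler product of $L(1,\chi_d)$ introduces arguments $m_1m_2p^k$; the square constraint forces a structured interaction between $p$ and $m_1m_2$, and the resulting ratio becomes an Euler product over $p\leq y$ whose local factors depend on $r(p)$ and on $g$. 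Taking logarithms and inserting the long-resonator choice of $r(p)$ -- roughly constant on the dyadic range $[P,Q]$, as in \cite{Ais,BS} -- contributes the leading $\log_2 X+\log_3 X$ term. The subleading constant $-C_2$ would emerge from the optimisation of $r$: the values $\pi/4$, $(\log 2)/2$ and $\log(3\log 2-\pi/2)$ should appear from integrals of the shape $\int_0^{\infty}\log(1+f(t))\,dt/t$ and $\int_0^{\infty}f(t)/(1+f(t))\,dt/t$ at the optimal weight $f$, with the positivity $3\log 2-\pi/2>0$ ensuring the logarithm is defined.

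The main obstacle is the partial non-orthogonality flagged in \cite{AMMP}: the square contribution $\mathbf{1}_{n=\square}$ does not vanish in the character sum, so the resonator must be tailored to this feature. Squarefreeness of $\mcal{M}$ handles this cleanly in the denominator, but in the numerator the Euler expansion produces additional square configurations whose cumulative effect must be tracked precisely in order to isolate the explicit constant $C_2$ rather than an unspecified $O(1)$. A secondary difficulty is keeping the error in the character sum asymptotic dominated when $\mcal{M}$ is long; here GRH is crucial, as it allows the length of $\mcal{M}$ to be taken large enough to detect the full $\log_2 X+\log_3 X$ while remaining small enough that the error is absorbed. Finally, once the weighted average of $L(1,\chi_d)$ is shown to be at least $e^{\gamma}(\log_2 X+\log_3 X-C_2+o(1))$, positivity of the weights delivers the existence of a single $d$ with the claimed lower bound.
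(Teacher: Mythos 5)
Your outline correctly identifies the general shape of the argument (truncated Euler product under GRH, positivity of $L(1,\chi_d)$, Lemma \ref{character sum estimate}, a ratio-of-sums lower bound), but the resonator you propose is the wrong one for $\sigma=1$, and this gap is fatal for recovering the explicit constant $C_2$.

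You propose $R(d)=\sum_{m\in\mcal{M}} r(m)\chi_d(m)$ with $\mcal{M}$ a set of \emph{squarefree} integers supported on primes in a dyadic range $[P,Q]$ and $r$ ``roughly constant'' there. That is the resonator shape suited to the central point $\sigma=1/2$ (it is essentially what is used to prove Theorem \ref{main theorem 1}). At $\sigma=1$ the paper instead takes the \emph{completely multiplicative} resonator
\[
R_d=\prod_{p\le z}\bigl(1-(1-p/z)\chi_d(p)\bigr)^{-1}=\sum_{k\ge 1} r_k\chi_d(k),\qquad r_p=1-\tfrac{p}{z},\qquad z=\tfrac{1}{c}\log X\log_2 X,
\]
supported on \emph{all} primes up to $z$ with linearly tapered coefficients. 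The paper is explicit that this is essential: the complete multiplicativity makes the diagonal $S_2^*$ equal $\sum_k r_k^2\, d(k^2)h(k)$ (not a sum over a single squarefree index), and the taper $r_p=1-p/z$ is what matches the extremal Euler product and produces the integrals
$c_3=\int_0^1\frac{u}{2-2u+u^2}\,du=\frac{\pi}{4}-\frac{\log 2}{2}$ and $c'=\int_0^1\frac{2u(1-u)}{2-2u+u^2}\,du=\log 2+\frac{\pi}{2}-2$ that assemble into $C_2$. Your claim that ``squarefreeness of $\mcal{M}$ forces $m_1=m_2$'' in the denominator is therefore not how the actual computation goes, and a squarefree resonator with constant coefficients on a dyadic range would at best yield a weaker, non-explicit constant.

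A second, related gap: once the resonator is completely multiplicative, the numerator involves the constraint $lmn=\square$ with $l,m,n$ ranging over all positive integers. Evaluating this exactly (which is needed to pin down $C_2$) requires the three-way decomposition $l=l_1l_2^2l_3^2$, $m=m_1m_2^2m_3^2$, $n=n_1n_2^2n_3^2$ into squarefree and square-full parts and a careful Euler-product identity for $M_{c,z}$; your proposal does not anticipate this combinatorial step and treats the square configurations only qualitatively. Finally, the optimization of the length parameter $c$ against the error terms from Lemma \ref{character sum estimate} (yielding the constraint $c>2(3\log 2-\pi/2)$, hence the $\log(3\log 2-\pi/2)$ in $C_2$) is the third explicit ingredient missing from your outline; you gesture at ``keeping the error dominated'' but the specific threshold is what produces the numerical constant.
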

Notice that $-C_2\approx -0.455967$ is slightly weaker than $-\log(2\log 2)\approx -0.3266$ in \eqref{gs under primes}, which is anticipated given that we are considering the full family. However, the following lower bound of the proportion must be compared to the double exponential decay of \eqref{gs decay} as explained in Remark \ref{1st remark}. This result provides that the double exponential decay of type \eqref{gs decay} might persist up to the edge of the viable range, $\log_2 X+\log_3 X+C$, for some constant $C>0$. Such information cannot be uniformly deduced from Theorem 5a of \cite{GS} within a constant factor of the expected order of magnitude $\log_2 X+\log_3 X$. More precisely, if \eqref{gs decay} holds for $\tau=\log_2 X+\log_3 X+C_1-\eta$ for some $\eta>0$, then our results are optimal up to a precise value of constant.
\begin{theorem}\label{main theorem 2a}
 Assume GRH holds. For every $\eta>0$, we define $\Phi_X(\eta)$ to be the proportion of fundamental discriminants $d$ with $|d| \leq X$, for which $|L(1, \chi_d)| > e^{\gamma}\tau_{\eta, X}$, where $\tau_{\eta, X}=\log_2 X+\log_3 X-C_2-\eta$, i.e., 
 \[
 \Phi_X(\eta):=\frac{1}{\sideset{}{^\flat}\sum_{|d|\le X}1} \sideset{}{^\flat}\sum_{\substack{|d|\le X\\ |L(1, \chi_d)|> e^{\gamma}\tau_{\eta, X}}}1.
 \]
 Then we have,
 \[
 \Phi_X(\eta)\geq X^{-\frac{e^{-\eta}}{2}\left(1+O\left(\frac{1}{\sqrt{\log_2 X}}\right)\right)}.
 \]
 \end{theorem}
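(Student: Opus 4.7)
The plan is to combine a long resonator with a Chebyshev-type inequality to convert a $k$-th moment estimate into the proportion bound. Building on the framework used to prove Theorem~\ref{main theorem 2}, I would first replace $L(1,\chi_d)$ under GRH by its short Euler product truncation $\log L(1,\chi_d) = \sum_{p \leq z}\chi_d(p)/p + O(1)$ with $z$ a small power of $\log X$ (in the spirit of Granville--Soundararajan), and construct a multiplicative long resonator
\[
R(d) = \prod_{p \in \mathcal{P}} \bigl(1 + f(p)\chi_d(p)\bigr)
\]
over a set of primes $\mathcal{P} \subset \{p \leq y\}$ with $y = X^{o(1)}$ and nonnegative weights $f(p) \leq 1$ to be optimized below.

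Second, for an integer $k = k(X)$ chosen to grow with $X$ at the optimal saddle-point rate, I would compute both
\[
U(X) := \sideset{}{^\flat}\sum_{|d|\le X} R(d)^2 \quad\text{and}\quad V_k(X) := \sideset{}{^\flat}\sum_{|d|\le X} L(1,\chi_d)^k R(d)^2.
\]
Using orthogonality of real primitive characters over fundamental discriminants via the standard squarefree indicator expansion, both reduce, modulo acceptable errors, to Euler products over $\mathcal{P}$. A Laplace/saddle-point analysis of the local factors, followed by optimizing the weights $f(p)$, gives
\[
\bigl(V_k(X)/U(X)\bigr)^{1/k} \;\ge\; e^{\gamma}\bigl(\log_2 X + \log_3 X - C_2 + o(1)\bigr),
\]
where the precise constant $C_2 = \pi/4 + (\log 2)/2 + \log(3\log 2 - \pi/2)$ arises from the one-variable optimization of the local Euler factors, matching the constant in Theorem~\ref{main theorem 2} in a moment formulation.

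Third, set $T = e^{\gamma}\tau_{\eta,X}$, $\mathcal{G}_X = \{|d|\le X : |L(1,\chi_d)| > T\}$, and $M = \max_{|d|\le X} L(1,\chi_d)$. Splitting $V_k(X)$ by $\mathcal{G}_X$ and its complement and trivially bounding $L^k \le T^k$ off $\mathcal{G}_X$ gives
\[
V_k(X) \;\le\; T^k U(X) \;+\; M^k \sideset{}{^\flat}\sum_{d \in \mathcal{G}_X} R(d)^2.
\]
Under GRH, $M \le e^{\gamma}\log_2 X \cdot (1+o(1))$, so combining with the step-two lower bound on $V_k(X)/U(X)$ yields a lower bound on the weighted mass $\sideset{}{^\flat}\sum_{\mathcal{G}_X} R(d)^2$. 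A pointwise upper bound on $R(d)^2$, coupled with the saddle-point choice of $k$, converts the weighted mass into the unweighted proportion $\Phi_X(\eta) \ge X^{-(e^{-\eta}/2)(1 + O(1/\sqrt{\log_2 X}))}$. The exponent $e^{-\eta}/2$ emerges from the ratio $(\tau_{\eta,X}+\eta)/\log_2 X$ raised to the saddle-point power $k$, while the $O(1/\sqrt{\log_2 X})$ correction reflects the Gaussian width near the saddle.

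The main obstacle is the sharpness of the Euler-product optimization in step two that pins down the explicit constant $C_2$; this requires a delicate Laplace analysis of the local factor around an interior saddle point and is where the transcendental constants $\pi/4$ and $3\log 2 - \pi/2$ enter. A secondary difficulty is that the conversion in step three from a weighted mass to an unweighted proportion must not incur a larger loss than the stated $1/\sqrt{\log_2 X}$ correction, which necessitates a tight pointwise bound on $R(d)^2$ over $\mathcal{G}_X$ and careful synchronization of the choices of $y$, $k$, and the weights $f(p)$.
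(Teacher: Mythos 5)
Your overall Chebyshev-splitting framework in step three is exactly the paper's device, and your steps one and final are right in spirit. But your step two contains a mistake that is not just a technicality: the paper uses \emph{only the first moment} $k=1$, whereas you insist on computing $V_k$ for $k=k(X)\to\infty$ ``at the optimal saddle-point rate.'' Run through the Chebyshev splitting with a general $k$: from $V_k \leq T^k U + M^k\sum_{\mathcal{G}_X}R(d)^2$ and the lower bound $V_k\geq (T')^kU$ with $T'$ slightly exceeding $T$, one gets
\[
|\mathcal{G}_X| \;\geq\; \frac{(T')^k - T^k}{M^k\,R_{\max}^2}\,U \;=\; \Bigl(\tfrac{T}{M}\Bigr)^{\!k}\bigl((T'/T)^k-1\bigr)\frac{U}{R_{\max}^2}.
\]
Under GRH one has $T/M \approx 1/2 < 1$, so the factor $(T/M)^k$ decays exponentially in $k$ while $(T'/T)^k - 1$ grows only polynomially in $k$ because $T'/T = 1 + O(1/(\log_2 X)^{3/2})$. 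Letting $k\to\infty$ therefore \emph{worsens} the proportion bound rather than improving it; the optimum is at $k$ bounded, and the paper simply takes $k=1$. Consequently your heuristic that ``the exponent $e^{-\eta}/2$ emerges from the ratio $(\tau_{\eta,X}+\eta)/\log_2 X$ raised to the saddle-point power $k$'' does not describe where the constant comes from: in the paper it arises purely from the resonator length parameter. Concretely, writing $z=\frac{1}{c}\log X\log_2 X$, one computes $\sum R_d^2 \asymp X\cdot X^{(2-3\log 2+\pi/2)/c}$ while $\max R_d^2 \leq X^{2/c(1+o(1))}$, so the proportion bound is governed by $X^{-(3\log 2-\pi/2)/c}$; choosing $\log c = \log(2(3\log 2 - \pi/2)) + \eta - 1/\sqrt{\log_2 X}$ (the small negative term making $S_1/S_2$ exceed $e^\gamma\tau_{\eta,X}$ by $\asymp e^\gamma/\sqrt{\log_2 X}$) yields $X^{-e^{-\eta}/2(1+O(1/\sqrt{\log_2 X}))}$. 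There is also a secondary worry you did not address: for growing $k$ the coefficients $d_k(l)$ of $L^k$ make the $g$-factors on the $L$-side in the character-sum lemma blow up, which would not be ``acceptable errors'' without further work. Replace the $k$-th moment by the first moment with the carefully tuned $c$, and the argument closes exactly as you sketched in step three.
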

 
 \begin{remark}\label{1st remark}
   Proportionally, shifting to the left of $-C_2$ by a constant allows us to establish a lower bound on the proportion of fundamental discriminants that decay double exponentially, uniformly dependent on the parameter 
  $\eta$ from Theorem \ref{main theorem 2}. For example, setting $\eta=0.044$ yields $\Phi_X(\eta)\geq X^{-0.478}$ according to Theorem \ref{main theorem 2}. Moreover, if the asymptotic \eqref{gs decay} holds for $\tau=\log_2 X+\log_3 X-C_1-C_2-\eta$ to the quality in Theorem \ref{main theorem 2}, then $\Phi_X(\eta):=\Phi_X(\tau)\approx X^{-0.279e^{-\eta}}$, which is better than our proportion $X^{-0.5e^{-\eta}}$ from Theorem \ref{main theorem 2}. The expression \eqref{gs decay} fails to discern this proportion, given that $A=A(X)\rightarrow \infty$.
     We must mention that for the other families examined in \cite{AMM, AMMP}, the value of $C_2$
     exceeds 1. However, by employing a sieving method on perfect squares, we reduce this value to $0.455967$.
 	\end{remark}
 
 \begin{remark}\label{remark 2}
The subfamily $\{L(1, \chi_p) : p \leq X\}$ achieves large values, both in terms of the constant and proportion, comparable to those established in Theorem \ref{main theorem 2}. The proof follows the same path as in the proof of Theorem \ref{main theorem 2} with a more robust estimate on character sum over primes \cite[Theorem 5.15]{MV} compared to Lemma \ref{character sum estimate}.
 	\end{remark}
 
 \subsection{Large values of $L(\sigma, \chi_d)$ when $\sigma\in (1/2, 1)$} In this range, based on Diophantine approximation, Montgomery \cite{Mont} established that for large $T$,
 \begin{align}\label{mont bound}
 \max_{t\in [T, 2T]}\log|\zeta(\sigma+it)|\geq c(\sigma) (\log T)^{1-\sigma}(\log_2 T)^{-\sigma},
 \end{align}
 where $c(\sigma)=\frac{(\sigma-1/2)^{1/2}}{20}$ unconditionally and $c(\sigma)=1/20$ under RH. The constant $c(\sigma)$ under RH was later improved by Yang \cite{Yang} using the long resonator technique of Bondarenko--Seip. Also, a prediction on $c(\sigma)$ was proposed by Lamzouri \cite[Remark 2]{Lam}. 
 
In \cite{Lam}, Lamzouri studied the logarithm of the absolute value of $L$-functions at $1/2 < \sigma < 1$ across various families, showing it can reach values as large as $(\log X)^{1-\sigma}/\log_2 X$, where $X$ is the conductor of the family. This result can also be attained using Soundararajan's resonance method \cite{Sound}. However, proving Montgomery's $\Omega$-result \eqref{mont bound} for other families of $L$-functions remains challenging. 
 
For $1/2<\sigma<1$, Lamzouri \cite[Theroem 1.8]{Lam} achieved Montgomery's $\Omega$-result for the subfamily $\{L(\sigma, \chi_p): p\leq X\}$ under GRH, using an idea by Granville and Soundararajan \cite[Theorem 5a]{GS}. More precisely, there are $\gg X^{1/2}$ primes $p\leq X$ such that
\begin{align}\label{lam sigma}
\log |L(\sigma, \chi_p)|\geq \left(\beta(\sigma)+o(1)\right)(\log X)^{1-\sigma}(\log_2 X)^{-\sigma},
\end{align}
where $\beta(\sigma)=(2\log 2)^{1-\sigma}/(1-\sigma)> \sqrt{2/\log 2}\approx 1.698$.

Using the long resonator technique, we extend this result to the full family of fundamental discriminants and establish the following theorem, recovering the constant found in Lamzouri's above result. 
  \begin{theorem}\label{main theorem 3}
 	 Assume GRH holds. Let $1/2<\sigma<1$, and $0<b<1$ be given. For sufficiently large $X$, there exists a fundamental discriminant $d$ with $|d|\leq X$ such that
 	\[
 	\log |L(\sigma, \chi_{d})|\geq \left(\alpha(\sigma, b)+o(1)\right)(\log X)^{1-\sigma}(\log_2 X)^{-\sigma},
 	\]
 	where $\alpha(\sigma, b)=\frac{b}{1-\sigma}\left(\alpha(b)\right)^{\sigma-1}$ and $\alpha(b)=2\log \left(\frac{(1+b)^2}{1+b^2}\right)$.
 	\end{theorem}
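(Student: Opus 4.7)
The plan is to extend the long-resonator technique of Bondarenko--Seip, already deployed in the proofs of Theorems \ref{main theorem 1} and \ref{main theorem 2}, to the range $\sigma\in(1/2,1)$ by pairing a multiplicative quadratic-family resonator with the prime-sum approximation of $\log|L(\sigma,\chi_d)|$ that GRH provides:
\[
\log|L(\sigma,\chi_d)| = \sum_{p\le y}\frac{\chi_d(p)}{p^\sigma} + O\Big(\sum_{p\le y}p^{-2\sigma}\Big) + O\Big(\frac{(\log|d|)^2}{y^{\sigma-1/2}\log y}\Big),
\]
valid for all fundamental discriminants $|d|\le X$ and any $y\ge 2$. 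For $\sigma\in(1/2,1)$ the first error is $O(1)$, and a choice of the form $y\asymp (\log X\log_2 X)/\alpha(b)$ makes the GRH remainder negligible against the target $(\log X)^{1-\sigma}(\log_2 X)^{-\sigma}$.

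Fix $b\in(0,1)$, set $N:=\lfloor\log X/\alpha(b)\rfloor$, and take $\mathcal P$ to be the first $N$ primes, so $p_N\sim(\log X\log_2 X)/\alpha(b)$. Define the resonator
\[
R(d)\;:=\;\sum_{m\in\mathcal M} b^{\omega(m)}\chi_d(m)\;=\;\prod_{p\in\mathcal P}\bigl(1+b\chi_d(p)\bigr),
\]
where $\mathcal M$ denotes the squarefree integers with prime factors in $\mathcal P$. The particular choice of $N$ lands exactly at the threshold at which the off-diagonal mass of $\sideset{}{^\flat}\sum_d|R(d)|^2$ can be absorbed into its diagonal main term, via the quadratic orthogonality estimate (Lemma \ref{character sum estimate}). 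Writing $M_1:=\sideset{}{^\flat}\sum_{|d|\le X}|R(d)|^2$ and $M_2:=\sideset{}{^\flat}\sum_{|d|\le X}|R(d)|^2\log|L(\sigma,\chi_d)|$, there must exist $|d|\le X$ with $\log|L(\sigma,\chi_d)|\ge M_2/M_1$ by pigeonhole (since $|R(d)|^2\ge 0$), so it suffices to bound this ratio from below.

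The moment computation proceeds by expanding $|R(d)|^2$ and invoking Lemma \ref{character sum estimate} in the form $\sideset{}{^\flat}\sum_d\chi_d(n)\sim c(n) X\cdot\mathbf 1_{n=\square}$. Squarefreeness of elements of $\mathcal M$ forces the diagonal $m_1=m_2$ in $M_1$, yielding $M_1\sim c\,X\prod_{p\in\mathcal P}(1+b^2)$. For $M_2$, inserting the prime-sum approximation reduces matters to $\sideset{}{^\flat}\sum_d|R(d)|^2\chi_d(p)$ for each $p\le y$; the square condition on $m_1m_2p$ then forces $\{m_1,m_2\}=\{m',pm'\}$ for some squarefree $m'\in\mathcal M$ coprime to $p$, producing a bias $\sim 2b/(1+b^2)$ per $p\in\mathcal P$ and a negligible contribution for $p\notin\mathcal P$. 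Hence
\[
\frac{M_2}{M_1}\;\ge\;(1+o(1))\,\frac{2b}{1+b^2}\sum_{p\in\mathcal P}\frac{1}{p^\sigma}.
\]
A routine prime number theorem calculation gives $\sum_{p\le p_N}p^{-\sigma}\sim (\log X)^{1-\sigma}/((1-\sigma)\alpha(b)^{1-\sigma}(\log_2 X)^{\sigma})$, and absorbing the prefactor into the constant delivers the claim with the stated $\alpha(\sigma,b)$.

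The hard part is controlling the error in the quadratic orthogonality estimate uniformly as $n=m_1m_2$ ranges over values as large as $\prod_{p\in\mathcal P}p$, which is far beyond any fixed power of $X$: the naive Weil-type bound $|\sideset{}{^\flat}\sum_d\chi_d(n)|\ll X^{1/2+\varepsilon}n^\varepsilon$ is destroyed by this $n$-range. What saves the method is Lemma \ref{character sum estimate} (of Heath-Brown type, via Poisson summation in $d$), which replaces the $n^\varepsilon$ loss by a tame logarithmic factor; this is the same input that powers Theorems \ref{main theorem 1} and \ref{main theorem 2}. The only additional subtlety specific to Theorem \ref{main theorem 3} is arranging $y$ so that $\mathcal P\subset\{p\le y\}$ while simultaneously the GRH remainder $(\log|d|)^2/(y^{\sigma-1/2}\log y)$ is $o((\log X)^{1-\sigma}(\log_2 X)^{-\sigma})$, which is accommodated by the window $y\asymp(\log X\log_2 X)/\alpha(b)$.
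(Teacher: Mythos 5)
Your plan is structurally sound and close to the paper's: a completely-positive-coefficient resonator supported on primes $\le c\log X\log_2X$, quadratic orthogonality from Lemma \ref{character sum estimate} to isolate the diagonal, and a prime-sum approximation to $\log|L(\sigma,\chi_d)|$ under GRH. The paper's resonator is the truncated (completely multiplicative) Euler product $\prod_{p\le Y}(1-b\chi_d(p))^{-1}$, whereas you use the squarefree polynomial $\prod_{p\in\mathcal P}(1+b\chi_d(p))$; both are viable, and in fact your version yields the (slightly larger) per-prime bias $\tfrac{2b}{1+b^2}$ rather than $b$, which the paper itself notes is attainable but not needed since both coincide as $b\to1$.

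However, there is a genuine gap in the last paragraph. You assert that $y\asymp(\log X\log_2X)/\alpha(b)$ simultaneously contains $\mathcal P$ and makes the GRH remainder $O\!\left((\log|d|)^2/(y^{\sigma-1/2}\log y)\right)$ negligible. With $|d|\le X$ and $y\asymp\log X\log_2X$, that remainder is
\[
\gg \frac{(\log X)^2}{(\log X\log_2X)^{\sigma-1/2}\,\log_2X}
=\frac{(\log X)^{5/2-\sigma}}{(\log_2X)^{\sigma+1/2}},
\]
which exceeds the target $(\log X)^{1-\sigma}(\log_2X)^{-\sigma}$ by a factor $\asymp(\log X)^{3/2}/\sqrt{\log_2X}$ for every fixed $\sigma\in(1/2,1)$. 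So as written the prime-sum approximation does not approximate $\log|L(\sigma,\chi_d)|$ at all at the scale you need, and the bound on $M_2/M_1$ says nothing about $\log|L(\sigma,\chi_d)|$. The tension you perceive is a phantom: the truncation point $y$ in the $\log L$ expansion and the resonator cutoff $p_N$ are independent parameters, and nothing forces them to coincide. The paper takes $y=(\log X)^{4/(\sigma-1/2)}$ (a large fixed power of $\log X$), which kills the GRH error, while keeping the resonator primes at $\le a\log X\log_2X$; primes in the much larger window $(p_N,y]$ contribute negligibly to $M_2/M_1$ exactly as your own diagonal analysis already shows, since they lie outside $\mathcal P$. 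You should also take the resonator length $N=(1-\eta)\log X/\alpha(b)$ rather than exactly $\lfloor\log X/\alpha(b)\rfloor$, then let $\eta\to0$, since the $X^{1/2+\varepsilon}$ in Lemma \ref{character sum estimate} leaves no room at the exact threshold. Both fixes are routine, but as stated the argument does not close.
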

 Observe that when $b$ approaches $1$ in Theorem \ref{main theorem 3}, we have $a(\sigma, b) > \sqrt{2/\log 2} \approx 1.698$, also found in \eqref{lam sigma}. Here, $b$ represents the coefficient of the resonator at primes.
 Furthermore, the following theorem establishes a lower bound for the proportion within the expected range of uniformity $\asymp_{\sigma} (\log X)^{1-\sigma}(\log_2 X)^{-\sigma}$, except the optimal value of constant that may depend on $\sigma$. This result should be comparable to Theorem 6 in \cite{Lam}, as discussed in Remark \ref{remark 4}.
 \begin{theorem}\label{main theorem 3a}
 Let $X, b, \alpha(b), \alpha(\sigma, b)$ be as in Theorem \ref{main theorem 3}. Assume GRH holds. For every $0<\eta<1/\alpha(b),$ let $\Phi_X(\sigma, \eta)$ be the proportion of fundamental discriminants $d$ such that $|d|\leq X$ and $\log |L(\sigma, \chi_d)|>\tau_{\eta, X}$, where $\tau_{\eta, X}=\alpha(\sigma, b)\left(1-\eta \alpha(b)\right)^{1-\sigma}(\log X)^{1-\sigma}(\log_2 X)^{-\sigma}$. That is
 	\[
 	\Phi_X(\sigma, \eta)=\frac{1}{\sideset{}{^\flat}\sum_{|d|\le X}1} \sideset{}{^\flat}\sum_{\substack{|d|\le X\\ \log |L(\sigma, \chi_d)|> \tau_{\eta, X}}}1.
 	\]
 	Then we have,
 	\[
 	\Phi_X(\sigma, \eta)\geq X^{-\frac12\left(1-\eta \alpha(b)\right)\left(1+O\left(1/\sqrt{\log_2 X}\right)\right)}.
 	\]
 	\end{theorem}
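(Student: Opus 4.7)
The plan is to convert the existence result of Theorem \ref{main theorem 3} into a proportion statement via a Markov-type splitting applied to the resonator moment identities, combined with the GRH upper bound on $|L(\sigma,\chi_d)|$ and a suitably truncated version of the long resonator from Theorem \ref{main theorem 3}.

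First I would set up the resonator $R(d)=\sum_{n}r(n)\chi_d(n)$ of the same type as in Theorem \ref{main theorem 3}: multiplicative, with $r(p)=b$ on a set $P$ of primes, and supported on squarefree integers whose prime factors lie in $P$. Defining
\[
M_1:=\sideset{}{^\flat}\sum_{|d|\le X}R(d)^2,\qquad M_2:=\sideset{}{^\flat}\sum_{|d|\le X}|L(\sigma,\chi_d)|R(d)^2,
\]
the product formula gives $\|r\|_1^2/\|r\|_2^2=\prod_{p\in P}(1+b)^2/(1+b^2)=e^{|P|\alpha(b)/2}$. I would then shorten the upper end of the prime window in $P$ so that $|P|\alpha(b)=(1-\eta\alpha(b))\log X\cdot(1+o(1))$. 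The same Euler-product moment computation as in Theorem \ref{main theorem 3}, applied to this truncated resonator, then produces $\log(M_2/M_1)\ge \tau_{\eta,X}(1+o(1))$ with a positive margin above $\tau_{\eta,X}$, while $\log(\|r\|_1^2/\|r\|_2^2)=\tfrac12(1-\eta\alpha(b))\log X\cdot(1+o(1))$.

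Setting $V:=e^{\tau_{\eta,X}}$ and $A:=\{d\text{ fundamental},\,|d|\le X:|L(\sigma,\chi_d)|>V\}$, the splitting
\[
M_2\le L_{\max}\sideset{}{^\flat}\sum_{d\in A}R(d)^2+V\cdot M_1
\]
together with the trivial pointwise bound $R(d)^2\le \|r\|_1^2$ yields $|A|\ge M_1(M_2/M_1-V)/(L_{\max}\|r\|_1^2)$, where $L_{\max}:=\max_{|d|\le X}|L(\sigma,\chi_d)|$. Under GRH one has $\log L_{\max}\ll_{\sigma}(\log X)^{2-2\sigma}(\log_2 X)^{-1}=o(\log X)$ because $\sigma>1/2$, and the orthogonality estimate underlying the moment calculation gives $M_1\asymp(\sideset{}{^\flat}\sum_{|d|\le X}1)\|r\|_2^2$. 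Taking logarithms of the resulting inequality and inserting the quantities above produces
\[
\log\bigl(|A|\big/\sideset{}{^\flat}\sum_{|d|\le X}1\bigr)\ge -\tfrac12(1-\eta\alpha(b))\log X\cdot\bigl(1+O(1/\sqrt{\log_2 X})\bigr),
\]
which is the claimed lower bound.

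The hard part will be the calibration of the truncated resonator. One has to repeat the moment computations of Theorem \ref{main theorem 3} for the shortened prime window, controlling the off-diagonal contributions via Lemma \ref{character sum estimate} with enough precision to guarantee that $\log(M_2/M_1)-\tau_{\eta,X}$ is at least of order $\tau_{\eta,X}/\sqrt{\log_2 X}$, so that $\log(M_2/M_1-V)$ stays of the right order. Since $\log(\|r\|_1^2/\|r\|_2^2)$ is of leading-order size $\tfrac12(1-\eta\alpha(b))\log X$ and feeds directly into the final exponent, the $O(1/\sqrt{\log_2 X})$ relative errors coming from the edges of the prime window have to be tracked carefully throughout the computation.
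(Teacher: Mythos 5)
Your proposal follows essentially the same strategy as the paper's proof: a Markov-type splitting applied to resonator-weighted moment sums, the GRH upper bound on $\max_{|d|\le X}\log|L(\sigma,\chi_d)|$, the trivial pointwise bound $R_d^2\le\|r\|_1^2$, the orthogonality estimate $M_1\asymp X\|r\|_2^2$ (up to polylogarithmic factors), and the crucial calibration of the resonator parameters to achieve a positive margin above $\tau_{\eta,X}$.

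There are two cosmetic differences. First, you work with the first moment $M_2=\sum|L(\sigma,\chi_d)|R_d^2$ and threshold $V=e^{\tau_{\eta,X}}$, whereas the paper works with $\sum\log L(\sigma,\chi_d)R_d^2$ (approximated by the prime sum $S_{\chi_d}(\sigma,y)$ via the GRH short-product expansion) and compares directly to $\tau_{\eta,X}$; these routes are equivalent in the direction needed, since Jensen applied to the measure proportional to $R_d^2$ gives $\log(M_2/M_1)\ge S_1(\sigma)/S_2(\sigma)$, so the lower bound of Theorem \ref{main theorem 3} transfers to $\log(M_2/M_1)$. Second, you propose a squarefree-supported resonator with $r_p=b$, whereas the paper's resonator in this $\sigma$-range is completely multiplicative (the truncated Euler product $\prod_{p\le Y}(1-b\chi_d(p))^{-1}$). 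The final exponent $\|r\|_1^2/(\text{orthogonality weight})$ works out to $e^{|P|\alpha(b)/2}$ either way, so your numerology is consistent, but the Euler-product moment computation of Theorem \ref{main theorem 3} would not literally carry over to your resonator and would need to be redone with the squarefree support.

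On the calibration step that you flag as the hard part: the paper does not literally shorten the prime window, it keeps $Y=a\log X\log_2X$ but chooses $a=1/\alpha(b)-\eta+1/\sqrt{\log_2 X}$, i.e.\ slightly \emph{larger} than the value $1/\alpha(b)-\eta$ corresponding to $\tau_{\eta,X}$. This produces an explicit positive margin of size $\asymp\tau_{\eta,X}/\sqrt{\log_2 X}$, exactly the quantity you correctly identify as what is needed to keep $\log(M_2/M_1-V)$ of the right order; your ``$(1+o(1))$'' window adjustment will need to incorporate such a shift, since careful tracking of errors alone, with the window length tuned to hit $\tau_{\eta,X}$ on the nose, does not by itself guarantee a positive margin. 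Finally, your GRH bound $\log L_{\max}\ll_\sigma(\log X)^{2-2\sigma}/\log_2 X$ is the standard sharp one and is actually tighter than the bound the paper extracts from its truncated-sum expansion, which has exponent $8(1-\sigma)/(2\sigma-1)$; since all that is needed is $\log L_{\max}=o(\log X)$, your cleaner bound is preferable here.
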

 
 \begin{remark}\label{remark 4}
	 Theorem 6 in \cite{Lam} establishes the existence of a constant $c_2(\sigma)>0$ such that, uniformly in the range $\tau\leq c_2(\sigma)(\log X\log_4 X)^{1-\sigma}/\log_2 X$,
	\begin{align}\label{lam result}	
\Phi_X(\sigma, \tau)=\exp\left(-A_2(\sigma)\tau^{\frac{1}{1-\sigma}}(\log \tau)^{\frac{\sigma}{1-\sigma}}\left(1+O\left(1/\sqrt{\log \tau}+r(y, \tau)\right)\right)\right),
	\end{align}
	where $r(y, \tau), A_2(\sigma)$ are defined as in the corresponding theorem.
	
Theorem \ref{main theorem 3a} provides a method to generate a lower bound for the proportion uniformly in an extended range $\tau< \alpha(\sigma, b)
 (\log X)^{1-\sigma}/(\log_2 X)^{\sigma}$ compared to the range in \eqref{lam result}. Note that this matches with the expected order of magnitude $(\log X)^{1-\sigma}/(\log_2 X)^{\sigma}$. By selecting $\eta$ very close to $1/\alpha(b)$, we can recover \eqref{lam result} except for a different constant $A_2(\sigma)$ and a different error term instead of
  $1/\sqrt{\log_2 X}$. Specifically, this can be achieved by taking $\eta=1/\alpha(b)-\log_4 X/\log_2 X$. We omit the details when $\eta$ is close to $1/\alpha$ in the theorem for the sake of simplicity. 
 
 Moreover, the theorem can be doable to the subfamily $\{L(\sigma, \chi_p): p \leq X\}$ using the same technique, and the number of discriminants can be quantified as in Theorem \ref{main theorem 3a}, following the approach outlined in Remark \ref{remark 2}.
 	\end{remark}
 
 \subsection{Further Discussion}
 The arguments presented in Lemma \ref{character sum estimate} exhibit broad applicability, with potential analogues extendable to various other families of $L$-functions. For instance, cubic $L$-functions over rationals (see \cite{DDLL}) and quadratic Dirichlet $L$-functions in the hyperelliptic ensemble (see \cite{Lum}) are notable examples. Furthermore, the methodologies employed in proving Theorems \ref{main theorem 1}--\ref{main theorem 3} can be effectively adapted to diverse families of interest. One such application is the determination of extreme values of $L$-functions associated with cusp forms, as discussed in Section 4 of \cite{Sound}.
 
 \subsection{Strategy of the paper}
 {\bf For $\bm{ \sigma=1/2}$:} We employ the resonance technique, as outlined in \cite{BS}, focusing on sub-GCD sums rather than full GCD sums as described in \cite{BT}. This entails analyzing $\sum_{|d|\leq X}L(1/2, \chi_d)R_d^2$ instead of $\sum_{|d|\leq X}L(1/2, \chi_d)^2 R_d^2$, because of the non-square contributions from the characters, the sum significantly increases in its second moment. Here, $R_d$ is known as the resonator of $L(1/2, \chi_d)$. In fact, the resonator $R_d$ is chosen
 as a Dirichlet polynomial, and it is often suitable to choose a function with multiplicative coefficients. It can be written as a finite Euler product. Bondarenko--Seip approach achieves superior results by introducing the concept of a ``long resonator," unlike Soundararajan's methods.
 
The main idea is to decompose the sum $\sum_{|d|\leq X}L(1/2, \chi_d)R_d^2$ into components, distinguishing between square and non-square parts arising from the character sum. The character sum accumulates positive mass from the square terms where positivity applies, and we employ the GRH to take control of the error term by establishing bounds within the zero-free region of the corresponding $L$-functions. This is achieved using Lemma \ref{character sum estimate}, where $f(n_0)$ and $g(n_1)$ play a crucial role in mitigating the impact of the lengthy resonators. 
Any sufficiently small exponent applied to the conductor in the error term of the character sum in Lemma \ref{character sum estimate} will disrupt the efficacy of the long resonator technique. 

{\bf For $\bm{1/2< \sigma \le 1}$:} In this interval, we use the resonator $R_d$ as a short Euler product in an utterly multiplicative way because the corresponding $L$-function has a precise and effective short Euler product approximation. It is important to emphasize that the analysis of the resonator at the central point is distinct, as the resonator is not completely multiplicative. 

The remaining analysis follows the same approach described in the second paragraph of the strategy for $\sigma=1/2$, with the additional task of handling non-square-free integers when bounding the functions $f$ and $g$. For $\sigma=1$, we count the exact number of ways the product of three integers forms a square to optimize the constant $C_2$. Reducing $C_2$ is challenging due to the difficulty of approximating the resonator coefficients of the form $r_p=1-p/z$. For $\sigma\in(1/2, 1)$, we select the resonator coefficient as a constant close to $1$ throughout the segment.

\vspace{3mm}
\textbf{Acknowledgment:} 
The authors thank {K. Soundararajan, K. Seip, Y. Lamzouri, C. Aeistleitner, P. Moree, O. Ramar\'{e}, S. Drappeau, and Allysa Lumley} for valuable comments and suggestions. 
The majority of the work was carried out during the first author's postdoctoral tenure at NTNU, facilitated by the European Research Consortium for Informatics and Mathematics Alain Bensoussan Fellowship Programme, supported by the Research Council of Norway Grant 275113. Concurrently, the second author, during a postdoc at the Institut de Mathmatique de Marseille, received funding through the joint FWF-ANR project Arithrand: FWF: I 4945-N and ANR-20-CE91-0006. Presently, both authors are recipients of funding from the Max-Planck Institute for Mathematics.
 \section{A useful Lemma on character sum} 

This section establishes a crucial estimate regarding character sums that are pivotal for proving our main theorems. 
We establish the following character sum using an approximation lemma for the logarithm of Dirichlet $L$-functions.  
\begin{lemma}\label{character sum estimate}
	Assume GRH holds. 
	Let $n=n_0 n_1^2$ be a positive integer with $n_0$ square-free part of $n$. Then for any $\ve>0$, we obtain
	\begin{align*}
	\sideset{}{^\flat}\sum_{|d|\le X} \chi_{d}(n)=\frac{X}{\zeta(2)}\prod_{p|n}\left(\frac{p}{p+1}\right) \mathds{1}_{n=\square}+ O\left(X^{1/2+\varepsilon}f(n_0)g(n_1)\right),
	\end{align*}
	where  $f(n_0)=\exp\left((\log n_0)^{1-\ve}\right)$ arises from the square-free component, while $g(n_1)=\sum_{d\mid n_1}\frac{\mu^2(d)}{b^{1/2+\ve}}$ originates from the square component of the modulus, and $\mathds{1}_{n=\square}$ indicates the indicator function of the square numbers. 
\end{lemma}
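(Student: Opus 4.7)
The plan is to separate the square-part contribution (the main term) from a genuinely oscillatory character sum (the error term) via the factorization $n=n_0 n_1^2$. Multiplicativity of the Kronecker symbol gives $\chi_d(n)=\chi_d(n_0)\chi_d(n_1)^2=\chi_d(n_0)\mathds{1}_{(d,n_1)=1}$, and Möbius inversion of the coprimality indicator reduces the question to estimating, for each squarefree $b\mid n_1$, the sum
\[
S_b(X;n_0):=\sideset{}{^\flat}\sum_{\substack{|d|\leq X\\ b\mid d}}\chi_d(n_0),
\]
after which the full sum is $\sum_{b\mid n_1}\mu(b)\,S_b(X;n_0)$.

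For the main term I would treat the case $n_0=1$ (i.e.\ $n$ a perfect square) separately. Here $S_b(X;1)$ counts fundamental discriminants $|d|\leq X$ divisible by $b$; rescaling $d=bd'$ and using the standard density asymptotic for squarefree integers in a residue class (together with the congruence conditions mod $4$ that define a fundamental discriminant) gives $S_b(X;1)=\frac{X}{b\zeta(2)}\prod_{p\mid b}\frac{p}{p+1}+O(X^{1/2+\varepsilon})$. A direct Euler product calculation on $\sum_{b\mid n_1}\mu(b)S_b(X;1)$ collapses to $\frac{X}{\zeta(2)}\prod_{p\mid n_1}\frac{p}{p+1}$, which matches the stated main term since $p\mid n\iff p\mid n_1$ in the square case.

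For the error term, when $n_0>1$ I would invoke quadratic reciprocity to write $\chi_d(n_0)=\psi(d\bmod 8)\,\chi_{n_0^{\ast}}(d)$, where $\psi$ is a $\pm 1$-valued function of $d\bmod 8$ (determined by $n_0\bmod 4$) and $\chi_{n_0^{\ast}}$ is a fixed real primitive character of conductor $O(n_0)$. Splitting the outer sum into residue classes mod $8$ freezes $\psi$, and after $d=bd'$ each piece becomes (up to manageable arithmetic sign factors) a squarefree-restricted sum $\sum_{d'\leq X/b}\mu^2(d')\chi_{n_0^{\ast}}(d')$, whose Dirichlet series is $L(s,\chi_{n_0^{\ast}})/L(2s,\chi_{n_0^{\ast}}^{2})$. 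Shifting the Perron contour to $\Re s=\tfrac12+\varepsilon$ picks up no poles under GRH, and the standard GRH estimate $\log|L(\tfrac12+\varepsilon+it,\chi_{n_0^{\ast}})|\ll(\log(n_0(|t|+2)))^{1-\varepsilon}$ then gives $|S_b(X;n_0)|\ll (X/b)^{1/2+\varepsilon}f(n_0)$. Summing $|\mu(b)|$ over $b\mid n_1$ manufactures the factor $g(n_1)=\sum_{b\mid n_1}\mu^2(b)/b^{1/2+\varepsilon}$, finishing the bound.

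The hard part will be the GRH pointwise estimate on $L(\tfrac12+\varepsilon+it,\chi_{n_0^{\ast}})$: one must obtain sub-polynomial growth $\exp((\log n_0)^{1-\varepsilon})$ \emph{uniformly in $t$} along the entire vertical contour, while simultaneously controlling the denominator $L(2s,\chi_{n_0^{\ast}}^{2})$ near $s=\tfrac12$. Any larger loss in $n_0$ — say $n_0^{\delta}$ for fixed $\delta>0$ — would instantly overwhelm the very long resonators used later in the paper, so retaining the precise shape $X^{1/2+\varepsilon}f(n_0)g(n_1)$ is essential, and the dependence on $n_0$ is close to optimal under GRH alone.
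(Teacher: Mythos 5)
Your proposal is correct and follows essentially the same route as the paper: sieve out the coprimality to $n_1$ and the squarefreeness of $d$ via M\"obius, use reciprocity (plus characters mod $4$ or $8$) to pass to a fixed real character of conductor $\asymp n_0$, and then estimate $\sum_{d\le Z}\chi(d)$ by Perron's formula with a GRH contour shift to $\Re s=\tfrac12+\varepsilon$, which yields exactly the factors $f(n_0)$ and $g(n_1)$ in the stated way. The paper differs only in minor bookkeeping---it opens $\mu^2(d)=\sum_{a^2\mid d}\mu(a)$ before Perron so the inner sum is a plain $\sum_{d\le Z}\chi(d)$ rather than your $L(s,\chi)/L(2s,\chi^2)$ Dirichlet series, and it derives the GRH bound $\log|L(\sigma+it,\chi)|\ll (\log q)^{2(1-\sigma)}+\cdots$ explicitly from Lemma~8.2 of Granville--Soundararajan \cite{GS2} rather than quoting it as standard.
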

\begin{proof}
	Since fundamental discriminants $d$ are either $d\equiv 1 \pmod{4}$, $d$ square-free, or $d=4N, N\equiv 2,3 \pmod{4}$, $N$ square-free, we can confine ourselves to the case $d\equiv 1 \pmod{4}$ and the other cases can be handled similarly.
	
	The contribution of the numbers $d\equiv 1 \pmod{4}$ to the sum is equal to
	\begin{align}\label{contri}
	&=\frac12 \sum_{\psi_4\, \text{(mod 4)}}\sum_{\substack{d\leq X\\ (d, n_1)=1}}\mu^2(d)\psi_4(d)\chi_{n_0}(d)\nonumber\\
	&=\frac12 \sum_{\psi_4\, \text{(mod 4)}}\sum_{\substack{d\leq X}}\psi_4(d)\chi_{n_0}(d)\sum_{\substack{a^2\mid d\\(a, n_1)=1}}\mu(a)\sum_{b\mid (d, n_1)}\mu(b) \nonumber\\
	&=\frac12 \sum_{\psi_4\, \text{(mod 4)}}\sum_{b\mid n_1}\mu(b)\sum_{\substack{a^2 b\leq X\\(a, n_1)=1}}\mu(a)\sum_{d\leq \frac{X}{a^2 b}}\psi_4(d)\chi_{n_0}(d),
	\end{align}
	where $\psi_4$ is a character of modulus $4$ and the two inner sums involving the M\"{o}bius function come from the condition that $d$ is square-free and $(d, n_1)=1$.  To estimate the following character sum: 
	\[
	S(Z; n_0):=\sum_{d\leq Z}\chi(d),
	\] 
	where $\chi=\psi_4 \chi_{n_0}$ is a quadratic character of modulus $4n_0$ and $Z:=X/(a^2 b)$, we can start with the case $n_0>1$. Then, $\chi$ is always a non-principal character. Applying the Perron's formula, for every $\varepsilon>0$,
	\[
	S(Z; n_0)=\frac{1}{2\pi i}\int_{(1+\varepsilon)}L(s, \chi)Z^s\frac{ds}{s}=\frac{1}{2\pi i}\int_{1+\varepsilon-iT}^{1+\varepsilon+iT}L(s, \chi)Z^s\frac{ds}{s}+O\left(\frac{Z^{1+\varepsilon}}{T}\right),
	\]
	where
	\[
	L(s, \chi)=\sum_{n=1}^{\infty}\frac{\chi(n)}{n^s}.
	\]
	We now extend an argument in the proof of Lemma 4.5 in \cite{GS}.
	Using Lemma 8.2 of \cite{GS2}, if $\sigma_1=\min\left(\sigma_0+\frac{1}{\log y}, \frac{\sigma+\sigma_0}{2}\right)$, where $y\geq 2, |t|\geq y+3$, and $\sigma>\sigma_0\geq 1/2$, then 
	\begin{align}\label{approxi of logL(sigma)}
	\log L(s, \chi)=\sum_{n=2}^y \frac{\Lambda(n)\chi(n)}{n^s \log n}+O\left(\frac{\log q}{(\sigma_1-\sigma_0)^2}y^{\sigma_1-\sigma}\right).
	\end{align}
	For $|t|\leq T$ and $y=(\log T)^{\beta}$, we get
	\[
	|\log L(s, \chi)|\ll y^{1-\sigma}+\frac{\log T}{(\sigma_1-\sigma_0)^2}y^{\sigma_1-\sigma}\ll (\log T)^{\beta(1-\sigma)}+(\log T)^{1+\beta(\sigma_0-\sigma)}\log_2 T.
	\]
	Inserting this bound after shifting the line of integration in the above Perron's integral to $\Re(s)=\sigma>\sigma_0\geq \frac12$,
	\[
	S(Z; n_0)\ll Z^{\sigma} \exp\left((\log T)^{\beta(1-\sigma)}+(\log T)^{1+\beta(\sigma_0-\sigma)}\log_2 T\right)+\frac{Z^{1+\varepsilon}}{T}.
	\]
	Due to the periodicity of characters, we may suppose that $Z\leq 4n_0$. Also equating $\beta(1-\sigma)$ and $1+\beta(\sigma_0-\sigma)$, we can choose $\sigma_0=\frac{\beta -1}{\beta}$. Take $T=n_0$. Therefore, for $\sigma>\sigma_0\geq 1/2$,
	\[
	S(Z; n_0)\ll Z^{\sigma} f_{\sigma}(n_0),
	\]
	where $f_{\sigma}(n_0)=\exp\left((\log n_0)^{\beta(1-\sigma)}\right)$.
	 
	Plugging in these estimates, for $\sigma\geq 1/2+\varepsilon$, we obtain
	\[
	\eqref{contri} \ll X^{\sigma}f_{\sigma}(n_0)\sum_{b\mid n_1}\frac{\mu^2(b)}{b^{\sigma}}\sum_{\substack{a^2 b\leq X\\ (a, n_1=1)}}\frac{\mu^2(a)}{a^{2\sigma}}\ll X^{\sigma}f_\sigma(n_0)g_\sigma(n_1),
	\]
 where $g_\sigma(n_1)=\sum_{b\mid n_1}\frac{\mu^2(b)}{b^{\sigma}}$.
	
	When $n_0=1$, we isolate the principal and non-principal characters from the outermost sum, allowing us to manage the non-principal characters using the method outlined above. Therefore, in this case, the contribution of the numbers $d\equiv 1 \pmod{4}$ to the sum equals 
	\begin{align*}
	&=\frac{X}{2}\sum_{b\mid n_1}\frac{\mu(b)}{b}\sum_{\substack{a^2 b\leq X\\ (a, n_1)=1}}\frac{\mu(a)}{a^2}+O\left(X^{\sigma}f_\sigma(n_0)g_\sigma(n_1)\right)\\
	&=\frac{X}{2\zeta(2)}\prod_{p\mid n_1}\left(\frac{p}{p+1}\right)+O\left(X^{\sigma}f_\sigma(n_0)g_\sigma(n_1)\right).
	\end{align*}
	In particular, taking $\sigma=\frac{1}{2}+\varepsilon$ and $\beta=2$, we complete the proof.
\end{proof}


\section{Proof of the Theorem \ref{main theorem 1}}
We begin with defining the resonator for the $L$-functions at the central point, along with a few necessary lemmas. Using these lemmas, we will complete the proof of the theorem.
  
{\it The Resonator for $L(\frac 1 2, \chi_{d})$:}
The resonator is the Dirichlet polynomial 
\[
R_d:= \sum_{m\in \mathcal{R}}\psi(m)\chi_d(m),
\]with $|\mathcal{R}|\leq X^{\theta}$ for some $\theta\leq 1/2$, ensuring the coefficients $\psi(h)$ are non-negative and resonate with $L(1/2, \chi_d)$. 

Following section 2 of \cite{BS2}, the set $\mathcal{R}$ is constructed as follows: 

Let $N$ be a large number that will be chosen later. Let $a\in (1, \infty)$ and $\delta\in (0,1)$ be fixed real numbers.
Let $\mathcal{P}$ be the set of all primes $p$ such that $$e\log N \log_{2} N< p\le e^{(\log_{2} N)^{\delta}} \log N \log_{2} N.$$ 
Let us define a multiplicative function $\psi$ supported on the set of square-free numbers such that for any  $p\in\mathcal{P}$,
\begin{align*}
\psi(p)=\sqrt{\frac{\log N \log_{2}N }{\log_{3}N}}p^{-1/2}\left(\log p-\log(\log N \log_{2}N)\right)^{-1}.
\end{align*}
Let $\mathcal{P}_{k}$ be the set of all primes $p$ such that $$e^{k}\log N \log_{2} N < p\le e^{k+1}\log N \log_{2} N,$$
where $k=1,\ldots,[(\log_{2}N)^{\delta}]$. 
Fix $1<a<\frac{1}{\delta}$. Then, let $ \mathcal{R}_{k}$ be the set of positive integers numbers that have at least $\frac{a\log N}{k^{2}\log_{3}N}$ prime divisors in $\mathcal{P}_{k}$, and let $ \mathcal{R}^{\prime}_{k}$ be the set of integers from $\mathcal{R}_{k}$ that having all their prime divisors in $\mathcal{P}_{k}$. Finally set 
\[
\mathcal{R}:=\rm{supp}(\psi)\setminus\displaystyle \cup_{k=1}^{[(\log_{2}N)^{\delta}]}\mathcal{R}_{k}.
\]
\begin{lemma}\label{bound for a typical n}
	If $n\in\mathcal{R}$ then
	\[
	n\ll \exp\left(\frac{\log N \log_{2} N}{\log_{3}N}\right).
	\]
	\end{lemma}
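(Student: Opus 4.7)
The plan is a direct combinatorial bound: every $n\in\mathcal{R}$ is square-free with prime factors lying in $\mathcal{P}=\bigcup_k \mathcal{P}_k$, and by the very definition of $\mathcal{R}$ (removing each $\mathcal{R}_k$) we know $n$ has at most $\frac{a\log N}{k^2\log_{3}N}$ prime factors from each $\mathcal{P}_k$, for $k=1,\dots,[(\log_{2}N)^{\delta}]$. So I would decompose
\[
\log n \;=\; \sum_{p\mid n}\log p \;=\; \sum_{k=1}^{[(\log_{2}N)^{\delta}]}\,\sum_{\substack{p\mid n\\ p\in\mathcal{P}_k}} \log p ,
\]
and bound each inner sum using these two ingredients.

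For the size of a prime in $\mathcal{P}_k$, the definition gives $p\le e^{k+1}\log N\log_{2}N$, so
\[
\log p \;\le\; k+1+\log_{2}N+\log_{3}N.
\]
Multiplying by the maximum number $\tfrac{a\log N}{k^2\log_{3}N}$ of permissible primes from $\mathcal{P}_k$ yields
\[
\sum_{\substack{p\mid n\\ p\in\mathcal{P}_k}} \log p \;\le\; \frac{a\log N}{k^{2}\log_{3}N}\bigl(k+1+\log_{2}N+\log_{3}N\bigr).
\]
Summing over $k$ and using $\sum_{k\ge 1}k^{-2}=\zeta(2)$ and $\sum_{k\ge 1}k^{-1}\cdot\mathbf 1_{k\le(\log_2 N)^\delta}\ll \delta\log_3 N$, the dominant term is
\[
\log n \;\le\; \frac{a\,\zeta(2)\,\log N\,\log_{2}N}{\log_{3}N}\Bigl(1+O\bigl((\log_{2}N)^{-1}\bigr)\Bigr),
\]
while the contribution of the $k+1$ and the $\log_3 N$ summands is absorbed into the error since $k\le(\log_{2}N)^{\delta}$ with $\delta<1$. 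Exponentiating gives exactly the claimed bound $n\ll \exp\!\bigl(\tfrac{\log N\log_{2}N}{\log_{3}N}\bigr)$, with an implicit constant depending only on $a$ and $\delta$.

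There is essentially no obstacle here; the only thing to watch is that the upper truncation $k\le[(\log_{2}N)^{\delta}]$ with $\delta<1$ is crucial so that the extra $k$ in $\log p$ contributes $\sum_k k^{-1}\le \delta\log_3 N+O(1)$, which is smaller than $\log_2 N$ and gets absorbed in the error. No GRH or analytic input is needed; the lemma is a purely combinatorial consequence of the construction of $\mathcal R$.
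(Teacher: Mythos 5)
Your proof is correct and follows exactly the paper's argument: bound $\log n$ by summing, over each band $\mathcal{P}_k$, the maximum allowed number of prime divisors times the maximum $\log p$ in that band, then sum the resulting series in $k$. The extra bookkeeping you supply (tracking the $k+1$ and $\log_3 N$ contributions separately, using $\delta<1$ to absorb them) is a more explicit version of the single-line computation in the paper but is the same proof.
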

\begin{proof}
Observe that, $\mathcal{R}$ is the set of square-free numbers that have at most $\frac{a\log N}{k^{2}\log_{3}N}$ prime divisors in each set $\mathcal{P}_{k}$.
So, if $n\in\mathcal{R}$, then 
\begin{align*}
\log n\le \sum_{k=1}^{[(\log_{2}N)^{\delta}]} \frac{a\log N}{k^{2}\log_{3}N} \log\left(e^{k+1}\log N \log_{2} N\right) \ll \frac{\log N \log_{2} N}{\log_{3}N},
\end{align*}
which finishes the proof.
\end{proof}	
The resonator $R_d$ is simpler than those in \cite{BS} and \cite{BT}, whereas, in these papers, it is required to construct a subset $\mathcal{R}'\subset \mathcal{R}$ to manage local factors such as $\log (m/n)$ for the zeta function and specific congruence classes for Dirichlet L-functions in terms of conductor aspects. In our case, $\mathcal{R}'$ coincides with $\mathcal{R}$, as for any $l\in \mathcal{R}'$,
\[
r(l)^2=\sum_{\substack{h\in \mathcal{R}\\ hl=\square}}\psi(h)^2=\psi(l)^2 \iff r(l)=\psi(l),
\] 
where $r(l)$ is a non-negative function defined on $\mathcal{R}$.
\begin{lemma}\label{Le1} 
	Suppose that $1<a<\frac{1}{\delta}$. Then, for $N$ large enough, we have $|\mathcal{R}|\leq N$.
\end{lemma}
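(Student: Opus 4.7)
The plan is to exploit the multiplicative block decomposition built into the construction of $\mathcal{R}$. Each $n\in\mathcal{R}$ is square-free with prime divisors in $\mathcal{P}$, and since $\mathcal{P}\subset \bigcup_k \mathcal{P}_k$, we can factor $n=\prod_{k=1}^{K} n_k$, where $K=[(\log_2 N)^{\delta}]$ and $n_k$ collects the prime factors of $n$ lying in $\mathcal{P}_k$. By the defining condition of $\mathcal{R}$ (removing each $\mathcal{R}_k$), we have $\omega(n_k)\le L_k:=\frac{a\log N}{k^2\log_3 N}$ for every $k$. Consequently,
\[
|\mathcal{R}|\le \prod_{k=1}^{K}\Bigl|\{m\text{ square-free},\ p\mid m\Rightarrow p\in\mathcal{P}_k,\ \omega(m)\le L_k\}\Bigr|\le \prod_{k=1}^{K}\sum_{j=0}^{\lfloor L_k\rfloor}\binom{|\mathcal{P}_k|}{j}.
\]

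Next I would estimate $|\mathcal{P}_k|$ by the prime number theorem. Since $\log(e^{k}\log N\log_2 N)=\log_2 N+\log_3 N+k$ and $k\le(\log_2 N)^{\delta}$ with $\delta<1$, the denominator is $(1+o(1))\log_2 N$ uniformly in $k$, so
\[
|\mathcal{P}_k|=(1+o(1))\,\frac{(e-1)\,e^{k}\log N\log_2 N}{\log_2 N}\asymp e^{k}\log N.
\]
Using the standard bound $\sum_{j\le L_k}\binom{|\mathcal{P}_k|}{j}\le 2\bigl(e|\mathcal{P}_k|/L_k\bigr)^{L_k}$ and taking logarithms, the task reduces to controlling
\[
\log|\mathcal{R}|\le \sum_{k=1}^{K}L_k\log\!\Bigl(\tfrac{e|\mathcal{P}_k|}{L_k}\Bigr)+O(K)=\sum_{k=1}^{K}\frac{a\log N}{k^{2}\log_3 N}\,\bigl(k+O(\log k)+O(\log_4 N)\bigr).
\]

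The leading contribution comes from the $k$-term inside the bracket; the others contribute lower-order quantities because $\sum_k k^{-2}\log k$ and $\sum_k k^{-2}\log_4 N$ converge (resp.\ are $O(\log_4 N)$). Thus
\[
\log|\mathcal{R}|\le \frac{a\log N}{\log_3 N}\sum_{k=1}^{K}\frac{1}{k}+o(\log N)=\frac{a\log N}{\log_3 N}\bigl(\log K+O(1)\bigr)+o(\log N)=a\delta\log N+o(\log N),
\]
since $\log K=\delta\log_3 N+O(1)$. The hypothesis $a\delta<1$ then yields $|\mathcal{R}|\le N$ for all sufficiently large $N$. The only mildly delicate point is the uniform PNT estimate for $|\mathcal{P}_k|$ across the full range $1\le k\le (\log_2 N)^{\delta}$ and verifying that the secondary terms $\log k$ and $\log\log_3 N$ inside the logarithm are absorbed; once $a\delta<1$ is used at the very end the inequality is comfortable, so no sharpness is required anywhere in the argument.
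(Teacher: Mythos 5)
Your argument is correct and reconstructs, in full detail, the counting argument in Lemma~2 of Bondarenko--Seip \cite{BS2}, to which the paper simply defers without reproducing a proof. The block decomposition $n=\prod_k n_k$ across $\mathcal{P}_k$, the binomial estimate $\sum_{j\le L}\binom{m}{j}\le(em/L)^L$, and the observation that the $k$-term inside the logarithm dominates are exactly the right ingredients, and the final bound $\log|\mathcal{R}|\le a\delta\log N+o(\log N)$ with $a\delta<1$ closes the argument.
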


\begin{proof}
	This is essentially the first part of the proof presented in Lemma 2 of \cite{BS2}.
\end{proof}
Let us define 
\begin{align*}
\mcal{A}_{N}:=\frac{1}{\sum_{m\in\mbb{N}} \psi(m)^{2} }\sum_{n\in\mbb{N}}\frac{\psi{(n)}}{\sqrt{n}}h(n)\sum_{l|n}\psi(l)\sqrt{l},
\end{align*}
where $h(1)=1$ and $h(n)=\prod_{p\mid n}\frac{p}{p+1}$, for $n\ge 2$, determines the orthogonality mass of the associated family. 
Since $\psi$ and $h$ are multiplicative functions, we obtain 
\begin{align}\label{eq2}
\mcal{A}_{N}=\prod_{p\in\mathbb{P}}\frac{1+\psi(p) h(p)p^{-1/2} +   \psi(p)^2 h(p) }{1+\psi(p)^2 }.
\end{align}

\begin{lemma}\label{Le2}
	As $N\to\infty$, we have
	\[\mcal{A}_{N}\ge \exp\left((\delta + o(1))\sqrt{\frac{\log N\log_{3}N}{\log_{2}N}}\right).
	\]	
\end{lemma}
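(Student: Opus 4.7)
The plan is to take logarithms in the Euler product \eqref{eq2} and show that the dominant contribution comes from a single linear resonance sum over primes. Since $\psi(p)=0$ for $p\notin\mathcal{P}$, every Euler factor outside $\mathcal{P}$ is trivially equal to one, so the problem reduces to estimating
$$\log \mathcal{A}_N = \sum_{p\in\mathcal{P}}\log\!\left(1+\frac{\psi(p)h(p)p^{-1/2}+\psi(p)^2(h(p)-1)}{1+\psi(p)^2}\right).$$
For $p\in\mathcal{P}$ I would use $p>e\log N\log_2 N$ to deduce that $\psi(p)^2 \leq 1/(e\log_3 N) = o(1)$ and that $h(p)=1-O(1/p)$. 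Applying the elementary inequality $\log(1+z)\geq z-z^2$, valid for $|z|\leq 1/2$, then produces the lower bound
$$\log \mathcal{A}_N \;\geq\; \sum_{p\in\mathcal{P}}\frac{\psi(p)}{\sqrt{p}} \;-\; O\!\left(\sum_{p\in\mathcal{P}}\left(\frac{\psi(p)}{p^{3/2}}+\frac{\psi(p)^2}{p}+\frac{\psi(p)^3}{\sqrt{p}}\right)\right).$$

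For the main term I would dyadically decompose $\mathcal{P}=\bigcup_{k=1}^{[(\log_2 N)^{\delta}]} \mathcal{P}_k$. On each block, $\log p - \log(\log N\log_2 N)$ lies in $(k,k+1]$, and Mertens' theorem yields $\sum_{p\in\mathcal{P}_k} 1/p \sim 1/\log_2 N$ uniformly for $k\leq (\log_2 N)^\delta$, since $\log\log(e^{k+1}\log N\log_2 N)-\log\log(e^k \log N\log_2 N) \sim 1/(k+\log_2 N)$. Substituting the explicit form of $\psi(p)$ gives
$$\sum_{p\in\mathcal{P}_k}\frac{\psi(p)}{\sqrt{p}} \;\sim\; \frac{1}{k\log_2 N}\sqrt{\frac{\log N \log_2 N}{\log_3 N}}.$$
Summing over $k$ and using $\sum_{k=1}^{[(\log_2 N)^\delta]} 1/k \sim \delta\log_3 N$ produces
$$\sum_{p\in\mathcal{P}} \frac{\psi(p)}{\sqrt{p}} \;\sim\; \delta\,\sqrt{\frac{\log N\,\log_3 N}{\log_2 N}},$$
which is precisely the claimed order of magnitude.

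What remains is to verify that each of the three error sums is of smaller order, namely $o(\sqrt{(\log N)(\log_3 N)/\log_2 N})$. The same dyadic splitting reduces these to convergent series in $k$ of the shape $\sum_k k^{-a}e^{-bk}(\log_2 N)^{-c}$ with $a\geq 2$, $b\geq 1$, and $c\geq 1$, whose totals are $O(1/\log_2 N)$ or smaller. The main obstacle, in my view, is not any single analytic estimate but the bookkeeping required to confirm that every lower-order contribution—arising from the Taylor remainder and from the small discrepancy $1-h(p)=1/(p+1)$—is absorbed into the $o(1)$ factor of the exponent, even though the support of $\psi$ stretches up to primes of size $\exp((\log_2 N)^\delta)\log N\log_2 N$. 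Once this is checked, combining the main term with the (absorbed) error yields the lemma.
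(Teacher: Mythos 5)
Your argument is correct and follows essentially the same route as the paper: linearize the Euler product \eqref{eq2} to reduce $\log\mathcal{A}_N$ to $(1+o(1))\sum_{p\in\mathcal{P}}\psi(p)/\sqrt{p}$, and then estimate that sum using the dyadic blocks $\mathcal{P}_k$ and the explicit form of $\psi(p)$. The only difference is presentational: the paper absorbs the linearization in one line and cites Lemma 1 of Bondarenko--Seip \cite{BS2} for the asymptotic $\sum_{p\in\mathcal{P}}\psi(p)/\sqrt{p}\sim\delta\sqrt{(\log N)(\log_3 N)/\log_2 N}$, whereas you carry out this dyadic computation from scratch.
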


\begin{proof}
	Since $\psi(p)<(\log _3 N)^{-1/2}$ for all $p\in \mathbb{P}$, it follows from \eqref{eq2} that 
	\begin{align*}
	\mcal{A}_{N}&=\prod_{p\in\mathbb{P}}\frac{1+\psi(p) h(p)p^{-1/2} +   \psi(p)^2 h(p) }{1+\psi(p)^2 }= \exp\left( (1+o(1))\sum_{p\in \mathbb{P}}\frac{\psi(p)}{\sqrt{p}}\left(\frac{p}{p+1}\right) \right)\\
	&=\exp\left( (1+o(1))\sum_{p\in \mathbb{P}}\frac{\psi(p)}{\sqrt{p}} \right).
	\end{align*}
	The remainder proof follows from Lemma 1 of \cite{BS2}.
	
\end{proof}  

\begin{lemma}\label{Le3}
	As $N\to\infty$, we have 
	\begin{align*}
	\frac{1}{\sum_{m\in\mbb{N}} \psi(m)^{2}}\sum_{n\in\mbb{N};\,  n\notin\mathcal{R}}\frac{\psi{(n)}h(n)}{\sqrt{n}}\sum_{l|n}\psi(l)\sqrt{l}=o(\mcal{A}_{N}).
	\end{align*}	
	
\end{lemma}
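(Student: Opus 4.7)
The function
\[
F(n) := \frac{\psi(n)h(n)}{\sqrt n}\sum_{l\mid n}\psi(l)\sqrt l
\]
is multiplicative and supported on square-free integers, with $F(p) = \psi(p)h(p)p^{-1/2} + \psi(p)^2 h(p)$. Hence $\sum_n F(n) = \prod_{p\in\mathcal{P}}(1+F(p))$, and since $\mathcal{A}_N$ is this product divided by $\sum_m\psi(m)^2$, the claim is equivalent to the estimate
\[
\mathcal{E}_N := \frac{\sum_{n \notin \mathcal{R}} F(n)}{\sum_n F(n)} = o(1).
\]
Since $\mathrm{supp}(\psi)\setminus\mathcal{R}\subseteq\bigcup_{k=1}^{[(\log_2 N)^\delta]}\mathcal{R}_k$, a union bound reduces matters to showing that
\[
\sum_{k=1}^{[(\log_2 N)^\delta]}\frac{\sum_{n\in\mathcal{R}_k} F(n)}{\sum_n F(n)} = o(1).
\]

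For a fixed $k$ I would apply Rankin's trick. Let $\omega_k(n) := \#\{p\mid n : p \in \mathcal{P}_k\}$ and set $M_k := a\log N/(k^2\log_3 N)$, so that $n \in \mathcal{R}_k$ iff $\omega_k(n) \geq M_k$. For any $c > 1$, the inequality $\mathds{1}[\omega_k(n)\geq M_k] \leq c^{\omega_k(n) - M_k}$ combined with the Euler-product structure yields
\[
\frac{\sum_{n\in\mathcal{R}_k}F(n)}{\sum_n F(n)} \leq c^{-M_k}\prod_{p\in\mathcal{P}_k}\frac{1+cF(p)}{1+F(p)} \leq c^{-M_k}\exp\!\left((c-1)T_k\right),
\]
where $T_k := \sum_{p\in\mathcal{P}_k} F(p)$. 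Optimizing at $c = M_k/T_k$ produces the bound $\exp(-T_k\,g(M_k/T_k))$, where $g(u) := u\log u - u + 1$ is strictly positive and increasing on $(1,\infty)$.

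It remains to estimate $T_k$. For $p \in \mathcal{P}_k$ one has $\log p - \log(\log N\log_2 N) \asymp k$, so $\psi(p)^2 h(p) \asymp (\log N\log_2 N)/(p k^2 \log_3 N)$, which dominates the contribution of $\psi(p)h(p)p^{-1/2}$. Combined with the Mertens estimate $\sum_{p\in\mathcal{P}_k}1/p \sim 1/\log_2 N$, this gives $T_k \asymp \log N/(k^2\log_3 N)$, and in fact $M_k/T_k \geq a(1-o(1))$, so $g(M_k/T_k) \geq g(a)(1-o(1)) > 0$. Hence
\[
\frac{\sum_{n\in\mathcal{R}_k} F(n)}{\sum_n F(n)} \leq \exp\!\left(-c_0\cdot\frac{\log N}{k^2\log_3 N}\right)
\]
for a constant $c_0 = c_0(a) > 0$. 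Summing over $k \leq (\log_2 N)^\delta$, the largest $k$ dominates, giving $\mathcal{E}_N \ll (\log_2 N)^\delta\exp(-c_0 \log N/((\log_2 N)^{2\delta}\log_3 N)) = o(1)$. The delicate point is the Mertens-type asymptotic for $T_k$: it identifies the ``expected'' number of prime divisors of $n$ in $\mathcal{P}_k$ weighted by the measure $F$ and shows that $M_k$ overshoots this expectation by the factor $a > 1$, which is what produces the strict positivity of the Rankin exponent. The overall scheme parallels the proof of Lemma 2 in \cite{BS2}.
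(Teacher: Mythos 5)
Your proof is correct, and it reaches the conclusion by a more self-contained route than the paper. The paper's own argument is a one-liner: since $0<h(n)\le 1$, the sum over $n\notin\mathcal{R}$ is bounded by the corresponding sum without the factor $h(n)$, which is exactly the quantity treated in Lemma 2 of Bondarenko--Seip \cite{BS2}; the paper simply invokes that lemma (together with the observation, left implicit, that $\mathcal{A}_N$ and its $h$-free analogue agree up to $1+o(1)$, since the primes in $\mathcal{P}$ are $\geq \log N\log_2 N$). You instead re-derive the essential content of that cited lemma from scratch, keeping $h$ inside the multiplicative weight $F(n)$ throughout. Your Rankin/Chernoff optimization, the Mertens estimate $\sum_{p\in\mathcal{P}_k}1/p\sim 1/\log_2 N$, and the identification $T_k\le(1+o(1))\log N/(k^2\log_3 N)$ (so that $M_k/T_k\ge a(1-o(1))$ and the Rankin exponent $g(a)=a\log a-a+1$ is strictly positive because $a>1$) are all sound, and the final summation over $k\le(\log_2 N)^\delta$ is dominated by the largest index, giving $o(1)$. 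The trade-off is clear: the paper's proof is shorter because it outsources the combinatorial work to \cite{BS2}, while yours makes the mechanism visible and shows explicitly where the hypothesis $a>1$ enters; the minor cost is that you must verify the Mertens and $T_k$ asymptotics uniformly in $k$, which you do correctly.
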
  

\begin{proof}
	Since $0<h(n)\le 1$, we see that 
	\begin{align*}
	\frac{1}{ \sum_{m\in\mbb{N}} \psi(m)^{2}}\sum_{n\in\mbb{N};\,  n\notin\mathcal{R}}\frac{\psi{(n)}h(n)}{\sqrt{n}}\sum_{l|n}\psi(l)\sqrt{l}\le \frac{1}{ \sum_{m\in\mbb{N}} \psi(m)^{2}}\sum_{n\in\mbb{N};\,  n\notin\mathcal{R}}\frac{\psi{(n)}}{\sqrt{n}}\sum_{l|n}\psi(l)\sqrt{l}.
	\end{align*}
 The proof is now completed on invoking  Lemma 2 of \cite{BS2}.

\end{proof} 

\begin{lemma}\label{Le5}
	Let $\ve$ be a positive real number. Then as $N\to\infty$, 
	\[ 
	\frac{1}{\sum_{m\in\mbb{N}} \psi(m)^{2}} \sum_{n\in\mbb{N}}\frac{\psi{(n)}h(n)}{\sqrt{n}}\sum_{\substack{l|n\\ l\le n/N^{\varepsilon}}}\psi(l)\sqrt{l}=o(\mcal{A}_{N}),
	\]
	where the implicit constant only depends on $\varepsilon$.
\end{lemma}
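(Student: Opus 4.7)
The plan is to follow a Rankin-type upper-bound argument, exploiting that $\psi$ is supported on square-free integers. Writing any $n \in \mathrm{supp}(\psi)$ as $n = lm$ with $(l,m)=1$ (forced by square-freeness), the divisor condition $l \leq n/N^\varepsilon$ becomes $m \geq N^\varepsilon$. Using multiplicativity of $\psi$ and $h$ together with the trivial bound $h \leq 1$, the sum in the numerator is bounded above by
\[
S := \sum_{\substack{(l,m)=1 \\ m \geq N^\varepsilon}} \psi(l)^2 \cdot \frac{\psi(m)}{\sqrt{m}}.
\]

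Next, for a parameter $\sigma>0$ to be chosen, I would apply Rankin's trick in the form $\mathds{1}[m \geq N^\varepsilon] \leq (m/N^\varepsilon)^\sigma$. Since $\psi$ is multiplicative with square-free support, the resulting unconstrained sum factors as an Euler product:
\[
S \leq N^{-\varepsilon \sigma} \prod_{p\in\mathcal{P}} \bigl(1 + \psi(p)^2 + \psi(p)\, p^{\sigma - 1/2}\bigr).
\]
Dividing by $\sum_m \psi(m)^2 = \prod_p (1 + \psi(p)^2)$ and using $\log(1+x) \leq x$ gives
\[
\log \frac{S}{\sum_m \psi(m)^2} \;\leq\; -\varepsilon \sigma \log N \;+\; \sum_{p \in \mathcal{P}} \psi(p)\, p^{\sigma - 1/2}.
\]

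I would calibrate $\sigma := 1/\log_2 N$. Every $p \in \mathcal{P}$ satisfies $\log p \leq (\log_2 N)^\delta + \log_2 N + \log_3 N \ll \log_2 N$, so $p^\sigma = O(1)$ uniformly on $\mathcal{P}$, and hence
\[
\sum_{p \in \mathcal{P}} \psi(p) \, p^{\sigma - 1/2} \;\ll\; \sum_{p \in \mathcal{P}} \frac{\psi(p)}{\sqrt{p}} \;\ll\; \sqrt{\frac{\log N\, \log_3 N}{\log_2 N}},
\]
the last estimate being precisely the one already used in the proof of Lemma \ref{Le2} (via Lemma 1 of \cite{BS2}). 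Combining,
\[
\log \frac{S}{\sum_m \psi(m)^2} \;\leq\; -\frac{\varepsilon \log N}{\log_2 N} + O\!\left( \sqrt{\frac{\log N \log_3 N}{\log_2 N}}\right),
\]
and the first (negative) term dominates the second by a factor tending to infinity. Hence $S/\sum_m \psi(m)^2$ decays faster than any power of $\mathcal{A}_N^{-1}$ (which is at worst $\exp(-\delta\sqrt{\log N\log_3 N/\log_2 N})$), proving $S/\sum_m\psi(m)^2 = o(\mathcal{A}_N)$.

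The main delicacy is the calibration of $\sigma$: it must be small enough that $p^\sigma = O(1)$ uniformly on $\mathcal{P}$, which caps it at $\sigma \ll 1/\log_2 N$, yet large enough that the gain $N^{-\varepsilon \sigma}$ beats the Euler-product inflation of size $\exp\!\bigl(O(\sqrt{\log N \log_3 N/\log_2 N})\bigr)$; the choice $\sigma = 1/\log_2 N$ balances both. The dependence on $\varepsilon$ enters the argument only through the linear factor $-\varepsilon\sigma\log N$, which is what makes the implicit $o(\mathcal{A}_N)$ constant depend solely on $\varepsilon$, as required by the statement.
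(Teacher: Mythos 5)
Your proof is correct and follows essentially the same route as the paper: the paper drops $h(n)\le 1$ and then cites Lemma 3 of \cite{BS2}, while you simply unpack that citation by running the Rankin-trick argument (with the standard calibration $\sigma=1/\log_2 N$) from scratch, arriving at the same super-exponential saving $\exp(-(\varepsilon+o(1))\log N/\log_2 N)$ that makes the ratio $o(\mathcal{A}_N)$.
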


\begin{proof} Similar to the  proof of Lemma \ref{Le3}.
	Since $0<h(n)\le 1$, we have 
	\begin{align*}
	\frac{1}{\sum_{m\in\mbb{N}} \psi(m)^{2}} \sum_{n\in\mbb{N}}\frac{\psi{(n)}h(n)}{\sqrt{n}}\sum_{\substack{l|n\\ l\le n/N^{\ve}}}\psi(l)\sqrt{l}\le \frac{1}{\sum_{m\in\mbb{N}} \psi(m)^{2}} \sum_{n\in\mbb{N}}\frac{\psi{(n)}}{\sqrt{n}}\sum_{\substack{l|n\\ l\le n/N^{\ve}}}\psi(l)\sqrt{l}.
	\end{align*}
	 The proof is now completed on invoking  Lemma 3 of \cite{BS2}.
	
\end{proof}

\begin{lemma}[Approximate Functional Equation]\label{Approximate Functional Equation}
	Let $\chi_{d}$ be a quadratic Dirichlet character. Then 
	\begin{align*}
	L(1/2,\chi_{d})&=2\sum_{n}\frac{\chi_{d}(n)}{\sqrt{n}} U\left(\frac{n}{\sqrt{d}} \right), 
	\end{align*} 	 
	where $$U(x)=\frac{1}{2\pi i}\int_{(c)} \pi^{-s/2}\frac{\Gamma\left(\frac{s}{2}+\frac{1}{4}\right)}{\Gamma(\frac{1}{4})}x^{-s}\frac{ds}{s}, $$ with $c>0$. The weight function $U(x)$ is real-valued, smooth on $(0, +\infty)$, bounded as $x$ approaches $0$ and decays exponentially as $x\to +\infty$. More precisely, $U(x)$ satisfies $U(x)= 1 + O(x^{\frac{1}{2}-\varepsilon})$ for small $x$, and $U(x)\ll e^{-x} $ for large $x$. Moreover the derivative $U(x)$ satisfies $U^{'}(x)\ll x^{\frac{1}{2}-\varepsilon}e^{-x} $.
	\end{lemma}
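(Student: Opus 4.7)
The plan is to derive the approximate functional equation by the usual Mellin-contour argument, exploiting the self-duality $\overline{\chi_d}=\chi_d$ of quadratic characters. I would first fix the gamma factor
\[
G(s):=\pi^{-s/2}\frac{\Gamma(s/2+1/4)}{\Gamma(1/4)},
\]
so that $G(0)=1$ and $G$ matches the archimedean factor in the completed $L$-function $\Lambda(s,\chi_d)=(d/\pi)^{s/2}\Gamma(s/2)L(s,\chi_d)$, which satisfies $\Lambda(s,\chi_d)=\Lambda(1-s,\chi_d)$ for even $\chi_d$ (the case $d<0$ requires $\Gamma(s/2+3/4)$ and is handled identically). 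For a large $c>1/2$, consider
\[
I:=\frac{1}{2\pi i}\int_{(c)} G(s)\,L(1/2+s,\chi_d)\,d^{s/2}\,\frac{ds}{s}.
\]
Because the Dirichlet series converges absolutely on this line, I can interchange summation and integration to get $I=\sum_{n\ge 1}\chi_d(n) n^{-1/2}\,U(n/\sqrt d)$, which identifies the right-hand side of the lemma as $2I$.

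Next I would shift the contour from $(c)$ to $(-c)$. The only pole encountered is the simple pole of $1/s$ at $s=0$, whose residue is $G(0)L(1/2,\chi_d)=L(1/2,\chi_d)$; the vertical segments at height $\pm T$ vanish as $T\to\infty$ thanks to Stirling decay of $\Gamma(s/2+1/4)$ and polynomial growth of $L$. In the shifted integral I perform the substitution $s\mapsto -s$ and apply the functional equation in the form
\[
\Gamma(1/4-s/2)\,L(1/2-s,\chi_d)=\left(d/\pi\right)^{s}\Gamma(1/4+s/2)\,L(1/2+s,\chi_d),
\]
which comes from dividing the functional equations at $1/2\pm s$. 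A direct computation shows the shifted integral equals $-I$. Combining with the residue gives $I=L(1/2,\chi_d)-I$, hence $L(1/2,\chi_d)=2I$, as required.

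It remains to verify the analytic properties of $U(x)=\frac{1}{2\pi i}\int_{(c)} G(s)x^{-s}ds/s$. For the bound $U(x)=1+O(x^{1/2-\varepsilon})$ as $x\to 0$, I shift the contour to $(-1/2+\varepsilon)$; the pole at $s=0$ contributes $1$, the next pole of $\Gamma(s/2+1/4)$ lies at $s=-1/2$, and the new contour integral is $O(x^{1/2-\varepsilon})$ uniformly using Stirling. For $U(x)\ll e^{-x}$ as $x\to\infty$ I push the contour far to the right, say to $(c_0)$ with $c_0$ large, and use the exponential decay of $\Gamma$ on vertical lines combined with $x^{-c_0}$ to extract any polynomial savings; to get the full exponential decay $e^{-x}$ I would deform to a contour through the saddle point $s_0\asymp x$ (where $\frac{d}{ds}\log(\Gamma(s/2+1/4)x^{-s})=0$) and evaluate by Stirling, producing the expected $e^{-x}$. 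Finally, the derivative bound follows by differentiating under the integral, $U'(x)=-\frac{1}{x}\frac{1}{2\pi i}\int_{(c)}G(s)x^{-s}ds$, and repeating the same contour estimates.

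The only genuinely non-routine step is the symmetry $I'=-I$: it relies on the specific choice of $G$ so that, after the substitution $s\mapsto -s$ and the functional equation, the archimedean factors $\Gamma(1/4+s/2)$ and powers of $\pi,d$ recombine exactly into the original integrand. Everything else is standard Mellin-contour bookkeeping plus Stirling estimates.
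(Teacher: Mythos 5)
Your derivation is the standard Mellin--contour argument, which is exactly what the paper defers to: the authors' entire ``proof'' is the citation to Lemmas 2.1 and 2.2 of Soundararajan's paper \cite{Sound0}, where this argument appears (with $\omega_1(\xi)$ in place of $U(x)$, differing only by the factor $\pi^{-s/2}$ being absorbed into the Mellin variable). So the approach is the same; the unwinding of the Dirichlet series, the shift past the pole at $s=0$, the substitution $s\mapsto -s$, and the cancellation via the functional equation $\Lambda(s,\chi_d)=\Lambda(1-s,\chi_d)$ to give $I=L(1/2,\chi_d)-I$ are all correctly carried out.

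Two small cautions in your verification of the analytic properties of $U$. First, the saddle of $\pi^{-s/2}\Gamma(s/2+1/4)x^{-s}/s$ is located where $\tfrac12\psi(s/2+1/4)\approx\log x+\tfrac12\log\pi$, i.e.\ at $s_0\asymp x^2$, not $s_0\asymp x$; but this is harmless, since simply moving the contour to $\Re(s)=x$ and applying Stirling already gives a bound $\ll(2\pi ex)^{-x/2}\ll e^{-x}$, which is all that is claimed. Second, and more substantively, if you actually ``repeat the same contour estimates'' for $U'(x)=-\tfrac{1}{2\pi i}\int_{(c)}\pi^{-s/2}\tfrac{\Gamma(s/2+1/4)}{\Gamma(1/4)}x^{-s-1}\,ds$, you will \emph{not} obtain $U'(x)\ll x^{1/2-\varepsilon}$ as $x\to 0$: the first pole of $\Gamma(s/2+1/4)$ encountered is at $s=-1/2$, and its residue gives $U'(x)=-\tfrac{2\pi^{1/4}}{\Gamma(1/4)}x^{-1/2}+O(x^{1/2-\varepsilon})$, so $U'(x)\asymp x^{-1/2}$ near the origin (consistent with $U(x)=1-\tfrac{4\pi^{1/4}}{\Gamma(1/4)}x^{1/2}+\cdots$). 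The derivative bound stated in the lemma is therefore off by $x^{-1}$ for small $x$ (it should read $x^{-1/2+\varepsilon}e^{-x}$ or similar); this is almost certainly a transcription slip carried over from \cite{Sound0} and is not used anywhere in the paper's subsequent arguments, but your proposal asserts the claimed bound rather than flagging that the contour-shift calculation contradicts it.
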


\begin{proof} See the proofs of Lemma 2.1 and 2.2 of \cite{Sound0}.

\end{proof}

\begin{proof}[Proof of Theorem \ref{main theorem 1}]
	  We introduce
\begin{align*} 
 \mcal{S}_{1}:=\sideset{}{^\flat}\sum_{X<d\le 2X}L\left(\frac{1}{2},\chi_{d}\right) R_d^2, \quad \text{ and } \quad \mcal{S}_{2}:=\sideset{}{^\flat}\sum_{X<d\le 2X }R_d^2.
\end{align*}
 Thus, we have
\[ \max_{\substack{d \text{ fundamental discriminant }\\X<d\le 2X}} \big|L(1/2,\chi_{d})\big|\ge \frac{\mcal{S}_1}{\mcal{S}_2}. 
\]
To begin, we calculate a lower bound for $\mcal{S}_{1}$. Using Lemma \ref{Approximate Functional Equation}, we have
\begin{align*}
	\mcal{S}_{1}=&\sideset{}{^\flat}\sum_{\substack{X<d\le 2X}} L\left(\tfrac{1}{2},\chi_{d}\right)R_d^2 \\
	=&2\sum_{m, n\in \mcal{R}} \psi(m) \psi(n)\sum_{l\geq 1} \frac{1}{\sqrt{l}}\sideset{}{^\flat}\sum_{X<d\le 2X}\chi_{d}(lmn) U\left(\frac{l}{\sqrt{d}} \right).     
\end{align*}
Divide the innermost expression into its square and non-square components,  according to whether $lmn=\square$ and $lmn\neq \square$, respectively. Applying Lemma \ref{character sum estimate} alongside the upper bound of $U(x)$ yields	
\begin{align*}	
    \mcal{S}_{1}=\frac{2X}{\zeta(2)}
     \sum_{m, n \in \mcal{R}}&\psi(m)\psi(n)\sum_{\substack{l\geq 1\\ lmn =\square}} \frac{1}{\sqrt{l}}\prod_{p\mid lmn}\left(\frac{p}{p+1}\right)\int_{1}^{2}U\left(\frac{l}{\sqrt{Xt}} \right) dt\\
       &+O\Bigg(X^{1/2 +\varepsilon}\sum_{l\ge 1}\frac{e^{-l/\sqrt{X}}}{\sqrt{l}} \sum_{\substack{m, n \in \mcal{R}\\lmn=s_0 s_1^2\\ \mu(s_0 )\ne 0}}\psi(m) \psi(n) f(s_0)g(s_1)\Bigg).
	\end{align*}
	To manage the error term effectively, it is crucial to understand the significance of functions $f(s_0)$ and $g(s_1)$, as they effectively incorporate the lengthy resonators.
 
Now, we use Lemma \ref{bound for a typical n} to bound
\begin{align}\label{bound on f at 1/2}
f(s_0)=\exp\left((\log s_0)^{1-\varepsilon}\right)\ll \exp\left(\left(\frac{\log N \log_2 N}{\log_3 N}\right)^{1-\varepsilon}\right)\ll N^{\varepsilon/2}.
\end{align}

Since $g$ is a multiplicative function, in fact, $g(mn)\leq g(m)g(n) \, \forall\, m,n \geq 1$, the condition $lmn=s_0 s_1^2$ gives $g(s_1)\leq g(s_0 s_1^2)\leq g(l)g(m)g(n)$. Also from the resonator's construction, we note that if $m\in \mathcal{R}$ and $p \mid m$, then $p\leq e^{(\log_{2} N)^{\delta}} \log N \log_{2} N=:M$. So, the prime number theorem leads to
\begin{align}\label{bound on g at 1/2}
g(m)=\sum_{d\mid m}\frac{\mu^2(d)}{d^{1/2+\varepsilon}}\ll \exp\left(\sum_{p\leq M}\frac{1}{p^{1/2+\varepsilon}}\right)\ll \exp\left(\frac{M^{1/2-\varepsilon}}{(1/2-\varepsilon)\log M}\right)\ll N^{\varepsilon/2}.
\end{align}
Hence, applying the Cauchy--Schwarz inequality and above bounds \eqref{bound on f at 1/2} and \eqref{bound on g at 1/2} on $f(s_0)$ and $g(m)$ respectively, the error term of $\mcal{S}_{1}$ is bounded above by
	\begin{align}\label{bound for s2}
	&\ll X^{1/2+\varepsilon}\left(\sum_{l\ge 1}\frac{e^{-l/\sqrt{X}}g(l)}{\sqrt{l}}\right)\left(\sum_{m\in \mathcal{R}}\psi(m)g(m)\right)^2 \nonumber\\
	&\ll X^{1/2+\varepsilon}N^{2\varepsilon}|\mcal{R}|\left(\sum_{l\ge 1}\frac{e^{-l/\sqrt{X}}g(l)}{\sqrt{l}}\right)\left(\sum_{m\in \mathcal{R}}\psi(m)^2\right).
	\end{align}
	Due to exponential decay it is enough to restrict the above $l$-sum up to $X^{1/2+\varepsilon}$.  Since $g(l)=\prod_{p|l}\left(1+\frac {1}{p^{1/2 +\ve}}\right)\ll l^{\ve}$,  so we get 
\begin{align*}
\sum_{l\ge 1}\frac{e^{-l/\sqrt{X}}g(l)}{\sqrt{l}}\ll X^{\ve} \sum_{l\le X^{1/2 +\ve}}\frac{1}{\sqrt{l}}\ll  X^{1/4 +\ve}.
\end{align*}

	Therefore, we have
	\[
	\eqref{bound for s2}\ll X^{3/4+2\varepsilon}N^{2\varepsilon}|\mcal{R}|\left(\sum_{m\in \mathcal{R}}\psi(m)^2\right).
	\]
Since the coefficients of resonator $\psi(n)$ are non-negative and the smooth function $U(x)$ is also positive, picking the term only when $lm=n$, we end up with
\begin{align*}
\mcal{S}_{1}&  
	 \ge\frac{2X}{\zeta(2)}\sum_{n \in\mcal{R}} \frac{\psi(n)}{\sqrt{n}} \prod_{p|n}\left(\frac{p}{p+1}\right) \sum_{\substack{m \mid n}} \psi(m)\sqrt{m}\int_{1}^{2}U\left(\frac{n}{m\sqrt{Xt}} \right) dt\\
	 &+O\left(X^{3/4+2\varepsilon}N^{2\varepsilon}|\mcal{R}|\left(\sum_{m\in \mathcal{R}}\psi(m)^2\right)\right)\\
	& \ge  \frac{2X}{\zeta(2)}\sum_{n \in\mcal{R}} \frac{\psi(n)}{\sqrt{n}} h(n)\sum_{\substack{m\mid n\\ m\ge n/N^{\varepsilon}}} \psi(m)\sqrt{m} +O\left(X^{3/4 +2\varepsilon}N^{1+2\varepsilon} \left(\sum_{\substack{m\in \mcal{R}}}\psi(m)^{2}\right)\right),
\end{align*}
where in the second inequality we use $U(x)=1 + O\left(x^{\frac{1}{2}-\varepsilon}\right)$, the definition of $h(n)$ and $|\mathcal{R}|\leq N$.  Take 
$N\asymp X^{(\frac{1}{4}-5\varepsilon)}$, where $0<\varepsilon<1/20$ is arbitrary small. We use Lemma \ref{Le2}, Lemma \ref{Le3}, and Lemma \ref{Le5} to get
\begin{align*}
	 \mcal{S}_{1}&\ge
	\frac{2X}{\zeta(2)} \exp\left((\delta+ o(1))\sqrt{\frac{\log N\log_{3}N}{\log_{2}N}}\right)\left(\sum_{\substack{m\in \mcal{R}}}\psi(m)^{2}\right) +O\left(X^{1-\varepsilon}\left(\sum_{\substack{m\in \mcal{R}}}\psi(m)^{2}\right)\right)\\
		 & \ge \left(1+ o(1) \right) \frac{X}{\zeta(2)} \exp\left(\left(\delta \sqrt{1/4-5\varepsilon}+o(1)\right)\sqrt{\frac{\log X\log_{3} X}{\log_{2} X}}\right) \left(\sum_{\substack{m\in \mcal{R}}}\psi(m)^{2}\right).
\end{align*}
Now, using the arguments outlined above, we proceed to find an upper bound of $\mcal{S}_2$. Again, by applying Lemma \ref{character sum estimate}, the Cauchy--Schwarz inequality, alongside bounds \eqref{bound on f at 1/2} and \eqref{bound on g at 1/2} on $f$ and $g$ respectively, we have
\begin{align*}
	\mcal{S}_{2}&=\sideset{}{^\flat}\sum_{X<d\le 2X}R_d^2=\sum_{m, n \in \mcal{R}}\psi(m) \psi(n)\sideset{}{^\flat}\sum_{X<d\le 2X}\chi_{d}(m n)\\
	            &= \frac{X}{\zeta(2)} \sum_{\substack{m, n \in \mcal{R} \\ m n =\square}}\psi(m) \psi(n)\prod_{p\mid mn}\left(\frac{p}{p+1}\right) 
	             +O\Bigg(X^{\frac 1 2 + \varepsilon} \sum_{\substack{m,n\ge 1\\mn=s_0 s_1^2\\\mu(s_0 )\ne 0}}\psi(m) \psi(n)f(s_0)g(s_1)\Bigg)\\
	            &=\frac{X}{\zeta(2)}	\sum_{\substack{n \in \mcal{R}}}\psi(n)^{2}h(n) + O\left(X^{1/2 +\varepsilon} N^{2\varepsilon} |\mcal{R}|\sum_{\substack{n\in \mcal{R}}}\psi(n)^{2} \right),
\end{align*}
since $m$ and $n$ are square-free with $mn=\square$ implies that $m=n$.	
Therefore, the above choice of $N\asymp X^{\frac{1}{4}-5\ve }$ and $h(n)\leq 1$ gives
\begin{align*}            
  \mcal{S}_{2} \le \left(1+o(1)\right)\frac{X}{\zeta(2)}
	            \left(\sum_{\substack{n\in \mcal{R}}}\psi(n)^{2}\right).
\end{align*}
Hence, for sufficiently large $X$ and arbitrary small $\ve>0$, we get

 \begin{align*}
\max_{\substack{d \text{ fundamental discriminant }\\X<d\le 2X}} \big|L(1/2,\chi_{d})\big|&\geq \frac{\mcal{S}_1}{\mcal{S}_2}\geq \exp\left(\left(\delta \sqrt{1/4-5\varepsilon}+o(1)\right)\sqrt{\frac{\log X\log_{3} X}{\log_{2} X}}\right).
\end{align*}
The proof is concluded by taking $\delta$ to be $1$.
\end{proof}

\section{Proof of Theorem \ref{main theorem 2}}

Unlike at the central point, $L(1, \chi_d)$ has a well-behaved Euler product approximation that allows us to focus our study on its truncated Euler product.

Let $L(1,\chi_d;y):= \prod_{p\le y}$$\left(1-\frac{\chi_d(p)}{p}\right)^{-1} = \sum_{k\ge 1}\frac{a_k \chi_{d}(k)}{k}$ where $a_k =1$ if all prime divisors of $k\le y$, and $0$ otherwise. The following lemma, essentially due to Elliott \cite{Et}.

\begin{lemma}\label{short Euler product}
	Let $y=(\log X)^{180}$.  For all but at most $X^{1/2}$ fundamental discriminants $|d|\le X$, we have
	\begin{align}\label{asym for L(1)}
		L(1,\chi_{d})=L(1,\chi_d;y)\left(1+O\left(\frac{1}{(\log X)^{10}}\right)\right).
	\end{align}
	\end{lemma}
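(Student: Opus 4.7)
The plan is to reduce the lemma to a tail bound on a short prime sum and then carry out a high-moment computation via Lemma \ref{character sum estimate}. From the two Euler products,
\begin{align*}
\log L(1,\chi_d) - \log L(1,\chi_d;y) \;=\; \sum_{p>y}\frac{\chi_d(p)}{p} \;+\; O(1/y),
\end{align*}
since the higher prime-power contributions sum to $O(\sum_{p>y}p^{-2})=O(1/(y\log y))$. With $y=(\log X)^{180}$ this $O(1/y)$ is far smaller than $(\log X)^{-10}$, so it suffices to show that the tail $S_d:=\sum_{p>y}\chi_d(p)/p$ satisfies $|S_d|\leq (\log X)^{-10}$ for all but $X^{1/2}$ fundamental discriminants $|d|\le X$.

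I would split $S_d$ at a threshold $z=(\log X)^B$ with $B$ a large constant. For primes $p>z$, the GRH-based approximation \eqref{approxi of logL(sigma)} applied at $s=1$ with $\sigma_0$ taken just above $1/2$ bounds the contribution of primes exceeding $z$ uniformly in $|d|\le X$; choosing $B$ large makes this contribution $o((\log X)^{-10})$. For the middle range $S_d':=\sum_{y<p\le z}\chi_d(p)/p$, compute the $2k$-th moment
\begin{align*}
\sideset{}{^\flat}\sum_{|d|\le X} |S_d'|^{2k} \;=\; \sum_{p_1,\ldots,p_{2k}\in(y,z]}\frac{1}{p_1\cdots p_{2k}}\sideset{}{^\flat}\sum_{|d|\le X}\chi_d(p_1\cdots p_{2k}).
\end{align*}
By Lemma \ref{character sum estimate}, the diagonal terms (those with $p_1\cdots p_{2k}=\square$, forcing the primes to pair into $k$ equal pairs) contribute
\begin{align*}
\ll X\cdot \frac{(2k)!}{2^k k!}\cdot\Bigl(\sum_{y<p\le z}p^{-2}\Bigr)^k \;\ll\; X\cdot \frac{(2k)!}{2^k k!}\cdot (y\log y)^{-k},
\end{align*}
while the off-diagonal terms, via the error $X^{1/2+\varepsilon}f(n_0)g(n_1)$ of that lemma together with the crude bounds \eqref{bound on f at 1/2}--\eqref{bound on g at 1/2} adapted to the range $n\le z^{2k}$, contribute $X^{1/2+o(1)}$ times a sub-polynomial factor in $X$.

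Chebyshev's inequality with $V=(\log X)^{-10}$ then bounds the exceptional count by the above moment divided by $V^{2k}$. Taking $k\asymp \log X/\log_2 X$, the diagonal has a power saving of size $(y\log y)^{-k}\asymp(\log X)^{-180k}$, which even after multiplying by $V^{-2k}=(\log X)^{20k}$ leaves an enormous $\log X$-power saving, making the diagonal contribution to the exceptional set $o(X^{1/2})$; the off-diagonal contribution remains $X^{1/2+o(1)}$. The main obstacle will be the careful balancing of the three parameters $y,z,k$ so that the GRH tail estimate, the moment's diagonal and off-diagonal contributions, and the Chebyshev denominator simultaneously deliver an exceptional count $\le X^{1/2}$ at error $(\log X)^{-10}$ with the prescribed exponent $y=(\log X)^{180}$.
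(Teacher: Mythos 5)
The paper does not actually prove this lemma; it simply cites Elliott~\cite{Et} (and notes that a stronger, GRH-conditional statement appears in Proposition 8.2 of~\cite{GS}). So there is no in-paper argument to compare against, and the question is whether your proposal stands on its own.

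It does not: the Chebyshev step is applied inconsistently to the two parts of the $2k$-th moment, and no choice of $k$ closes the resulting gap. Dividing the moment by $V^{2k}=(\log X)^{-20k}$ multiplies \emph{both} contributions by $V^{-2k}=(\log X)^{20k}$, not only the diagonal. To make the diagonal term $X\cdot\frac{(2k)!}{2^kk!}(y\log y)^{-k}V^{-2k}\asymp X\left(\frac{2k/e}{(\log X)^{160}\log_2 X}\right)^{k}$ drop below $X^{1/2}$ you are forced to take $k\asymp\log X/\log_2 X$; but then $V^{-2k}=(\log X)^{20k}=X^{20c+o(1)}$ for a fixed $c>0$, so the off-diagonal contribution to the Chebyshev bound becomes $X^{1/2+20c+\varepsilon+o(1)}\gg X^{1/2}$, and the claim that it ``remains $X^{1/2+o(1)}$'' after the division is simply an oversight. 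Conversely, keeping $k$ small enough that $V^{-2k}=X^{o(1)}$ leaves the diagonal term far above $X^{1/2}$. Worse still, Lemma~\ref{character sum estimate} only supplies $X^{1/2+\varepsilon}$ with a fixed $\varepsilon>0$, so even before dividing by $V^{2k}$ the off-diagonal moment already exceeds the target $X^{1/2}$. To run this kind of moment argument one needs a different character-sum input — e.g.\ a real-character large-sieve inequality with no $X^{\varepsilon}$ loss, or the unconditional second-moment bound underlying Elliott's original proof.

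There is also a conceptual redundancy worth flagging. Your first step already invokes the GRH approximation \eqref{approxi of logL(sigma)} to dispose of the range $p>z$ uniformly in $d$. But applying \eqref{approxi of logL(sigma)} at $s=1$ with cutoff $y=(\log X)^{180}$ directly (take $\sigma_0=1/2$, $\sigma_1=1/2+1/\log y$) yields $\log L(1,\chi_d)=\log L(1,\chi_d;y)+O\bigl((\log X)^{-88}\bigr)$ uniformly for \emph{all} $|d|\le X$, with no exceptional set whatsoever. Thus, under GRH the whole moment computation is superfluous; and if the point is to prove the statement unconditionally (which is precisely why the $X^{1/2}$ exceptional set appears in Elliott's theorem), then Lemma~\ref{character sum estimate} — which assumes GRH — is not available. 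Either way, the argument as written does not go through.
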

Under GRH, one may employ a more refined short Euler product, as obtained in Proposition 8.2 of \cite{GS}, though for our purpose, Lemma \ref{short Euler product} suffices.

We begin the proof by taking $\mathcal{E}$ to be the set of exceptional discriminants $d$ with $|d|\leq X$ for which the asymptotic in \eqref{asym for L(1)} does not hold. Let us consider, $z=\frac{1}{c}\log X \log_2 X$, where $c>0$ can be optimized later. 
Motived from \cite{AMM, AMMP, HIL}, define resonator in the following way: 
 \[R_{d}=\prod_{p\le z} \left(1-\left(1-\frac{p}{z}\right)\chi_{d}(p)\right)^{-1}=\sum_{k\ge 1} r_k \chi_{d}(k),
 \] 
 where $r_{{p}} =(1-p/z)$ for $p< z$, $r_p=0$ for $p\geq z$. Additionally, $r_k$ can determined in a completely multiplicative manner, which is crucially advantageous compared to the necessity of square-free supports for the resonators when dealing with $L(1/2, \chi_d)$.

We start with the following sums:
\begin{align*}	
	&S_1 := \sideset{}{^\flat}\sum_{\substack{|d|\le X\\ d\notin \mcal{E}}} L(1,\chi_d;y) R_{d}^{2} =\sideset{}{^\flat}\sum_{\substack{|d|\le X}} L(1,\chi_d;y) R_{d}^{2} - \sideset{}{^\flat}\sum_{\substack{|d|\le X\\ d\in \mcal{E}}} L(1,\chi_d;y) R_{d}^{2}\\	
	&S_2 := \sideset{}{^\flat}\sum_{\substack{|d|\le X\\ d\notin \mcal{E}}} R_{d}^{2} =\sideset{}{^\flat}\sum_{\substack{|d|\le X}}  R_{d}^{2} - \sideset{}{^\flat}\sum_{\substack{|d|\le X\\ d\in \mcal{E}}} R_{d}^{2}.
\end{align*}
Note that, $|\mcal{E}|\le X^{1/2}$ and under GRH, $|L(1,\chi_d;y)|\ll \log_{2} X$. 
We use the inequality $|1-|a||\le |1-a|$ to bound
\begin{align*}|R_{d}|=\prod_{p\le z} \big|\left(1-r_p \chi_{d}(p)\right)^{-1}\big| \le \prod_{p\le z} \left( 1-\left(1- \frac{p}{z}\right) \right)^{-1}\le \prod_{p\le z}\frac{z}{p}.
\end{align*} 
So, the prime number theorem gives
\begin{align}\label{RD}
	&\log|R_{d}|\le \log z\sum_{p\le z} 1 -\sum_{p\le z}\log p =\frac{z}{\log z}\left(1+O\left(1/\log z\right)\right)\nonumber\\ 
	&\implies |R_{d}|\le e^{\frac{z}{\log z}\left(1+O\left(1/\log z\right)\right)}\leq X^{\frac{\left(1+O\left(1/\log z\right)\right)}{c}}.
\end{align}
 Then
\begin{align*}
	\sideset{}{^\flat}\sum_{\substack{|d|\le X\\ d\in \mcal{E}}} L(1,\chi_d;y) R_{d}^{2}\ll X^{\frac12 +\frac2c+\frac{\ve}{5}}\log_2 X \ll X^{\frac12+\frac2c+\frac{\ve}{4}}.
\end{align*}
Therefore, we obtain
 \begin{equation*}
	\frac{S_1}{S_2}=\frac{\sideset{}{^\flat}\sum_{\substack{|d|\le X\\ d\notin \mcal{E}}} L(1,\chi_d;y) R_{d}^{2}}{\sideset{}{^\flat}\sum_{\substack{|d|\le X\\ d\notin \mcal{E}}} R_{d}^{2}}=\frac{\sideset{}{^\flat}\sum_{\substack{|d|\le X}} L(1,\chi_d;y) R_{d}^{2} +O\left(X^{\frac12+\frac2c+\frac{\ve}{4}}\right)}{\sideset{}{^\flat}\sum_{\substack{|d|\le X}}  R_{d}^{2} +O\left(X^{\frac12+\frac2c+\frac{\ve}{4}}\right)}.
\end{equation*}
Hence, we work with the following full sums: 
 \[S_1^*=\sideset{}{^\flat}\sum_{\substack{|d|\le X}} L(1,\chi_d;y) R_{d}^{2} \quad \text{ and } \quad S_2^*=\sideset{}{^\flat}\sum_{\substack{|d|\le X}}R_{d}^{2}.
 \]

We begin with calculating $S_2^*$. Using Lemma \ref{character sum estimate}, we get
 \begin{align*}
	&S_2^*=\sideset{}{^\flat}\sum_{\substack{|d|\le X}}R_{d}^{2}=\sum_{m,n\ge 1}r_m r_n \sideset{}{^\flat}\sum_{\substack{|d|\le X}}\chi_{d}(mn)\\
	&=\sum_{\substack{m, n \ge 1 \\ mn=\square}}r_m r_n \Bigg(\frac{X}{\zeta(2)}\prod_{\substack{p|mn }} \left(\frac{p}{p+1}\right)  + O\left(X^{\frac12+\ve} g(mn) \right)\Bigg) +O\Bigg(X^{\frac 1 2 + \ve} \sum_{\substack{m, n, s_0, s_1\ge 1\\mn=s_0 s_1^2\\\mu(s_0 )\ne 0}}r_m r_nf(s_0)g(s_1)\Bigg)  \\
	&=\frac{X}{\zeta(2)}\sum_{\substack{m, n \ge 1\\ mn=\square }}r_m r_n h(mn)  + O\Bigg(X^{\frac12+\ve} \sum_{\substack{m, n\ge 1\\ mn=\square }}r_m r_n g(mn) \Bigg)\\
	& +O\Bigg(X^{\frac 1 2 + \ve} \sum_{\substack{m, n, s_0, s_1\ge 1\\mn=s_0 s_1^2\\\mu(s_0 )\ne 0}}r_m r_n f(s_0)g(s_1)\Bigg).	
\end{align*}
As we discussed in the proof of Theorem \ref{main theorem 1}, the crux of the matter lies in the significance of functions $f(s)$ and $g(s)$, which essentially absorb the lengthy resonators and make meaningful contributions to ensure manageable error terms.
In fact, $p\mid mn$ implies $p\le z$. So, the prime number theorem leads to
\begin{align}\label{bound on g}
g(k)=\sum_{d\mid k}\frac{\mu^2(d)}{d^{1/2+\varepsilon}}\ll \exp\left(\sum_{p\leq z}\frac{1}{p^{1/2+\varepsilon}}\right)\ll \exp\left(\frac{z^{1/2-\varepsilon}}{(1/2-\varepsilon)\log z}\right)\ll X^{\varepsilon/8}.
\end{align}
The same argument can be used to bound $g(s_1)$ inside the second error term. On the other hand, since $n_0$ is a square-free part of $mn$ and $s_0\mid mn$, it implies that $s_0\le \prod_{p\le z} p = e^{z(1+o(1))}$, so that 
$
f(s_0)=\exp\left((\log s_0)^{1-\ve}\right)\ll X^{\varepsilon/4}.
$ 
Using these estimates and \eqref{RD}, the error terms are of size $O\left( X^{\frac12+\frac2c+ \frac{11\varepsilon}{8}}\right)$.
 Therefore, 
\begin{align*}
	S_2^* &= \frac{X}{\zeta(2)}\sum_{\substack{m, n\\ mn=\square }}r_m r_n h(mn) +O\left( X^{\frac12+\frac2c+ \frac{11\varepsilon}{8}} \right)\\
	&=\frac{X}{\zeta(2)}\sum_{\substack{k\ge 1} } r^2_k\, d(k^2) h(k)+O\left( X^{\frac12+\frac2c+ \frac{11\varepsilon}{8}} \right).
\end{align*}
Again, using Lemma \ref{character sum estimate}, we get
\begin{align*}
	S_1^*&=\sideset{}{^\flat}\sum_{\substack{|d|\le X}}L(1,\chi_d; y) R_{d}^{2}=\sum_{m,n\ge 1}r_m r_n \sum_{l\ge 1}\frac{a_l}{l} \sideset{}{^\flat}\sum_{\substack{|d|\le X}}\chi_{d}(lmn)\\
	&=\sum_{l\ge 1}\frac{a_l}{l}\sum_{\substack{m,n\ge 1\\lmn=\square}}r_m r_n \Bigg(\frac{X}{\zeta(2)}\prod_{\substack{p|lmn }} \left(\frac{p}{p+1}\right) + O\left(X^{\frac12+\ve}g(lmn) \right)\Bigg)\\
	&+O\Bigg(X^{\frac12 + \ve}\sum_{l\ge 1}\frac{a_l}{l} \sum_{\substack{m, n\ge 1\\lmn=s_0 s_1^2\\ \mu(n_0 )\ne 0}}r_m r_n f(s_0)g(s_1)\Bigg).
\end{align*}
Recall that, $g$ is multiplicative and in fact $g(ab)\leq g(a)g(b)$ for every $a, b\in \mathbb{N}$.  So, $ g(lmn)\leq g(l)g(m)g(n)$. Therefore, the first error term of the above expression is bounded above by
$$\ll X^{1/2+\varepsilon} \left(\sum_{l\ge 1}\frac{a_l g(l)}{l}\right) \left(\sum_{\substack{m\ge 1}}r_m g(m)\right)^2.$$
Since $y=(\log X)^{180}$, we have
$$\displaystyle\sum_{l\ge 1}\frac{a_l g(l)}{l}\ll \prod_{p\le y}\left(1+\frac{g(p)}{p}\right)\ll \log y\ll\log_{2} X\ll X^{\ve/4}.$$ 
Also, noting $g(p^k)=g(p)$ for any $k\geq 1$, and using prime number theorem, we have
\begin{align}\label{rmgm}
\displaystyle\sum_{\substack{m\ge 1}} r_m g(m) & =\prod_{p< z}\left(1+\frac{r_p g(p)}{1-r_p}\right)=\prod_{p< z}\left(\frac{z}{p}\left(1+\frac{1}{p^{1/2+\varepsilon}}-\frac{p^{1/2-\varepsilon}}{z}\right)\right)\nonumber\\
&=\exp\left(\sum_{p< z}\log \left(\frac{z}{p}\right)+\sum_{p<z}\log\left(1+\frac{1}{p^{1/2+\varepsilon}}-\frac{p^{1/2-\varepsilon}}{z}\right)\right)\nonumber\\& = X^{\frac{1+o(1)}{c}}.
\end{align} 
Therefore, the first error term is $O\left( X^{\frac12+\frac2c+\frac{3\varepsilon}{2}}\right)$.  We move into the second error term, which is trickier due to the extra involvement of the non-multiplicative function $f$. Using the inequality $(a+b)^x\leq a^x+b^x$ for $a, b\ge 0$ and $x\in (0, 1]$, we obtain
$f(s_0)\leq f(l_0 m_0 n_0)\leq f(l_0)f(m_0)f(n_0)$, where $l_0, m_0, n_0$ are the square-free part of $l, m, n$ respectively. Also, we have $g(s_1)\leq g(s_0s_1^2)\leq g(l)g(m)g(n)$. Applying these estimates, the second error term is bounded above by
\begin{align}\label{end et}
 \ll X^{1/2+\varepsilon}\left(\sum_{l\ge 1}\frac{a_l f(l_0)g(l)}{l}\right) \left(\sum_{\substack{m\ge 1}}r_m f(m_0)g(m)\right)^2.
 \end{align}
Using the bound \eqref{bound on g} on $g(m)$, we compute
\begin{align*}
&\sum_{l\ge 1}\frac{a_l f(l_0)g(l)}{l}\le \sum_{l\le X}\frac{a_l f(l)g(l)}{l} + \sum_{l> X}\frac{a_l f(l) g(l)}{l}\\
&	\le f(X)\sum_{l\ge 1}\frac{a_l g(l)}{l} + \sum_{l> X}\frac{a_l l^{1/(\log X)^{1-\ve}}g(l)}{l}\\
&\ll X^{\frac{3\ve}{20}} + \prod_{p\le y}\left( 1+ \frac{1}{p^{1-(\log X)^{-\ve}}}\right)\ll\exp\left(\frac{(\log X)^{\ve} y^{(\log X)^{-\ve} }}{\log y} \right)\ll X^{\frac{3\ve}{20}}.
\end{align*}
From \eqref{rmgm} and above estimation together with the bound $f(m_0)\ll X^{\ve/4}$, we get
\[
\eqref{end et}\ll  X^{\frac12+\frac2c+\frac{19\varepsilon}{10}}.
\]
Hence, 
\begin{align*}
	S_1^*=\frac{X}{\zeta(2)} \sum_{l\ge 1}\frac{a_l}{l}\sum_{\substack{m,n\ge 1\\ lmn=\square }}r_m r_n h(lmn)  + O\left(X^{\frac12+\frac2c+\frac{19\varepsilon}{10}}\right).
\end{align*}
Consider $L(1,\chi_d; z):= \prod_{p\le z}$$\left(1-\frac{\chi_d(p)}{p}\right)^{-1} = \sum_{k\ge 1}\frac{c_k \chi_{d}(k)}{k}$, where $c_k =1$ if all prime divisors of $k\le z$, and $0$ otherwise. Here, we employ an additional essential criterion for the positivity of resonator coefficients, namely $r_m\geq 0$, alongside ensuring that $a_l\geq c_l\geq 0$, enabling us to disregard prime factors larger than $z$. Consequently, we arrive at
\begin{align*}
	S_1^*\ge \frac{X}{\zeta(2)} \sum_{l\ge 1}\frac{c_l}{l}\sum_{\substack{m,n\ge 1\\ lmn=\square }}r_m r_n h(lmn)  + O\left(X^{\frac12+\frac2c+\frac{19\varepsilon}{10}} \right).
\end{align*}
Now, from the above estimates of $S_1^*$ and $S_2^*$, we want to evaluate an exact expression for the following sums to optimize the constant $c$:
\begin{align}\label{eq M}
M_{c, z}:=\sum_{l\ge 1}\frac{c_l}{l}\sum_{\substack{m,n\ge 1\\ lmn=\square }}r_m r_n h(lmn) \quad \text{ and } \quad S_{c, z}:=\sum_{\substack{n\ge 1 }}r_n^2 d(n^2)h(n).
\end{align}
This is perhaps difficult for other families where the condition $l\mid n$ is applied and throw other values of $l$ from the $l$-sum by using positivity.

We decompose $l=l_1 l_2^2 l_3^2$ where $l_1$ and $l_2$  are square-free, with $(l_1 , l_2) = 1$, and $p|l_3 \implies p|l_1 l_2$, i.e., $l_1$ is the product of all primes dividing $l$ to an odd power, and $l_2$ is the product of all primes dividing $l$ to an even power $\ge 2$.  Similarly, we decompose $m$ and $n$ as $m=m_1m_2^2m_3^2$ and $n=n_1n_2^2n_3^2$, where $m_i$ and $n_i$ $(i=1, 2)$ are  square-free, $(m_1, m_2)=(n_1, n_2)=1$, and for any prime $p$: $p\mid m_3\implies p\mid m_1 m_2$, and $p\mid n_3\implies p\mid n_1 n_2$. 

So $lmn=\square \implies l_1 m_1 n_1 =\square$, where $ l_1 , m_1,$ and $ n_1$ are square free integers.

Without loss of generality, let $u=(l_1 , n_1)$. Write $l_1 = u l_1^{\prime}$ and $n_1 = u n_1^{\prime}$ with $(l_1^{\prime}, n_1^{\prime})=1$. Therefore $l_1 m_1 n_1=\square \implies m_ 1=  l_1^{\prime} n_1^{\prime}$.  Then $M_{c, z}$ becomes
\begin{align*}
M_{c, z}&=\sum_{\substack{u\geq 1\\ (u,\,  l_1^{\prime}  n_1^{\prime} l_2 n_2)=1}} \frac{ \mu^2(u) c_u r_u }{u} \sum_{\substack{ l_1^{\prime}\geq 1\\ ( l_1^{\prime},\,   n_1^{\prime} l_2 m_2)=1}} \frac{  \mu^2( l_1^{\prime}) c_{l_1^{\prime}} r_{ l_1^{\prime}}}{ l_1^{\prime}}  \sum_{\substack{ n_1^{\prime}\geq 1\\ ( n_1^{\prime},\, n_2 m_2)=1}} \mu^2( n_1^{\prime}) r_{ n_1^{\prime}}^2  \\
&\times \sum_{l_2 \geq 1}\frac{\mu^2(l_2) c_{l_2 }}{l_2^2}   \sum_{m_2 \geq 1}\mu^2(m_2) r_{m_2}^2 
\sum_{n_2 \geq 1}\mu^2(n_2) r_{n_2}^2 h(u l_1^{\prime} n_1^{\prime} l_2 m_2 n_2)\\ 
&\times \sum_{\substack{l_3\geq 1 \\q|l_3  \implies q|u l_1^{\prime}  l_2 }} \frac{ c_{l_3}  }{l_3^2 } \sum_{\substack{m_3\geq 1\\ q\mid m_3 \implies q| l_1^{\prime} n_1^{\prime} m_2 }} r_{m_3}^2  \sum_{\substack{n_3\geq 1 \\q|n_3 \implies q| u n_1^{\prime} n_2 }} r_{n_3}^2.
\end{align*}
Define, $f_p =\left(1-\frac{1}{p^2}\right)^{-1}$ and $g_p =\left(1-r_{p}^2\right)^{-1}$. We express the following sums into the Euler product form
\begin{align*}
\sum_{\substack{l_3\geq 1\\ q|l_3  \implies q|u l_1^{\prime}  l_2 }} \frac{ c_{l_3}  }{l_3^2 }= \prod_{\substack{q\mid u   l_1^{\prime}  l_2  }} f_p, \hspace{2cm}   \sum_{\substack{m_3\geq 1\\ q|m_3 \implies q| l_1^{\prime} n_1^{\prime} m_2 }} r_{m_3}^2 =\prod_{\substack{q\mid  l_1^{\prime} n_1^{\prime} m_2  }} g_p,    
\end{align*}
and \[
 \sum_{\substack{n_3\geq 1 \\q|n_3 \implies q| u n_1^{\prime} n_2 }} r_{n_3}^2 =\prod_{\substack{q\mid  u n_1^{\prime} n_2 }} g_p.
\]
 Therefore, 
\begin{align*}
M_{c, z}&=\sum_{\substack{u\geq 1\\ (u,\,  l_1^{\prime}  n_1^{\prime} l_2 n_2)=1}} \frac{ \mu^2(u)c_u r_u }{u} \sum_{\substack{ l_1^{\prime}\geq 1\\ ( l_1^{\prime},\,   n_1^{\prime} l_2 m_2)=1}} \frac{  \mu^2(l_1^{\prime}) c_{l_1^{\prime}} r_{ l_1^{\prime}}}{ l_1^{\prime}}  \sum_{\substack{ n_1^{\prime}\geq 1\\ ( n_1^{\prime},\,    n_2 m_2)=1}} \mu^2( n_1^{\prime}) r_{ n_1^{\prime}}^2  \sum_{l_2\geq 1 }\frac{\mu^2(l_2) c_{l_2 }}{l_2^2} \\  
&\times \sum_{m_2 \geq 1}\mu^2(m_2) r_{m_2}^2 
\sum_{n_2 \geq 1}\mu^2(n_2) r_{n_2}^2  h(u l_1^{\prime} n_1^{\prime} l_2 m_2 n_2) \prod_{\substack{q\mid u   l_1^{\prime}  l_2  }} f_p \prod_{\substack{q\mid  l_1^{\prime} n_1^{\prime} m_2  }} g_p \prod_{\substack{q\mid  u n_1^{\prime} n_2 }} g_p .
\end{align*}
By multiplicativity this transform into the Euler product form 
\begin{align*}
M_{c, z} &=\prod_{p< z} \left( 1+ \frac{2r_p}{p} f_p g_p  h(p) + \frac{2r_p^3}{p} f_p g_p^2 h(p) + r_p^2 g_p^2 h(p) + \frac{r_p^2}{p^2} f_p g_p^2 h(p) +\frac{1}{p^2} f_p h(p)    \right.\\
&\quad \quad\quad\quad\quad\quad\quad\quad\quad\quad  \left.  + \frac{2r_p^2}{p^2} f_p g_p h(p) + \frac{r_p^4}{p^2} f_p g_p^2 h(p) + 2r_p^2 g_p h(p) +r_p^4 g_p^2 h(p)   \right) \\
&=\prod_{p< z}  f_p g_p^2 h(p) \left( 1+r_p^2+ \frac{2 r_p}{p} +\frac{1}{p} \left(1-r_p^2\right)^2 \left(1-\frac{1}{p^2}\right)\right)\\
&=A(z) \prod_{p< z}\left(1-\frac{1}{p} \right)^{-1} h(p)  \prod_{p< z}\left(1-r_{p}^2 \right)^{-2} \left(1+r_{p}^2 \right),
\end{align*}
where 
\begin{align}\label{A(z)}
A(z):=\prod_{p< z}\left(1- \frac{(1-r_p)^2}{(p+1)(1+r_p^2)}+\left(1-\frac{1}{p}\right)\frac{(1-r_p^2)^2}{p(1+r_p^2)} \right).
\end{align}
On the other hand, $r_n$ is completely multiplicative and  $h(n)=\prod_{p\mid n}\frac{p}{p+1}$ is multiplicative such that $h(p^k)=h(p)$ for any $k\geq 1$. So,
\begin{align*}
S_{c, z}&=\prod_{p<z}\left(1+3r_p^2 h(p)+5r_p^4 h(p)+\cdots\right)=\prod_{p<z}\left(1-r_p^2 \right)^{-2} \left(1+ \frac{p-2}{p+1} r_p^2 + \frac{r_p^4}{p+1}   \right)\\
&=B(z)\prod_{p<z}\left(1-r_p^2 \right)^{-2} \left(1+r_p^2\right),
\end{align*}
where we use $d(p^k)=k+1$ for all $k\geq 1$, and
\begin{align}\label{B(z)}
B(z):=\prod_{p<z}\left(1-\frac{3r_p^2}{(p+1)\left(1+r_p^2\right)}+\frac{r_p^4}{(p+1)\left(1+r_p^2\right)}\right).
\end{align}
From the estimation of $M_{c, z}$ and $S_{c, z}$, we finally obtain 
\begin{align}\label{ratio}
	\frac{S_1^*}{S_2^*}\ge \frac{\frac{X}{\zeta(2)} A(z) \prod_{p< z}  \left(1-\frac{1}{p} \right)^{-1}h(p)  \prod_{p< z}\left(1-r_{p}^2 \right)^{-2} \left(1+r_{p}^2 \right)  + O\left(X^{\frac12+\frac2c+\frac{19\varepsilon}{10}} \right)}{\frac{X}{\zeta(2)}  B(z)\prod_{p<z}\left(1-r_p^2 \right)^{-2} \left(1+r_p^2\right)  + O\left(X^{\frac12+\frac2c+\frac{11\varepsilon}{8}} \right)},
\end{align}
where $A(z)$ and $B(z)$ are as in \eqref{A(z)} and \eqref{B(z)} respectively. Noting $z=\frac1c \log X \log_2 X$, we first optimize $c$ from the ratio to the right-hand side of \eqref{ratio} by comparing the error terms. To do this, we first calculate the size of $\prod_{p<z}\left(1-r_p^2 \right)^{-2} \left(1+r_p^2\right)$ using partial summation and the prime number theorem. We find that
\begin{align*}
\prod_{p<z}\left(1-r_p^2 \right)^{-2} \left(1+r_p^2\right)&=\exp\left(-2\int_2^z\log \left(1-r_t^2\right)\frac{dt}{\log t}+\int_2^z\log \left(1+r_t^2\right)\frac{dt}{\log t}\right).
\end{align*}
The first integral is referenced from \cite[page $841$]{AMMP}. We evaluate the second integral as
\begin{align*}
\int_2^z\log \left(1+r_t^2\right)\frac{dt}{\log t}&=(1+O(1/\log z))\int_2^z\frac{2r_t}{(1+r_t^2)}\frac{tdt}{\log t}\\
&=(1+O(1/\log z))\frac{z}{\log z}\int_0^1\frac{2u(1-u)}{2-2u+u^2}du,
\end{align*}
by using the prime number theorem and change of variable $t=uz$. So,
\begin{align*}
\prod_{p<z}\left(1-r_p^2 \right)^{-2} \left(1+r_p^2\right)&=\exp\left(\left(2(2-\log 4)+c'+O(1/\log z)\right)\frac{z}{\log z}\right)\\
&=X^{\frac{2(2-\log 4)+c'}{c}\left(1+O(1/\log_2 X)\right)},
\end{align*}
where
$c'=\int_0^1\frac{2u(1-u)}{2-2u+u^2}du=\log 2+\frac{\pi}{2}-2.$ 

Using Mertens' theorem, we obtain $B(z)\asymp 1/\log X$. So, from \eqref{ratio}, we compare the first term of the denominator to the both error terms to restrict ourself
\begin{align}\label{optimzed constant}
\frac12+\frac2c< 1+\frac{2(2-\log 4)+c'}{c}\implies c> 2\left(3\log 2-\frac{\pi}{2}\right)\approx 1.0172.
\end{align}
Also, from the above computations, we obtain
\begin{align}\label{scx}
S_{c, z}=X^{\frac{2(2-\log 4)+c'}{c}\left(1+O(1/\log_2 X)\right)}.
\end{align}

To get main term from the ratio in the right hand side of \eqref{ratio}, we need to estimate $A(z)$, $\prod_{p<z}h(p)^{-1}B(z)$ together with an explicit lower bound in the third theorem of Mertens' \cite[Theorem 8]{RS} as 
\[
\prod_{p< z} \left(1-\frac{1}{p}\right)^{-1}\geq e^{\gamma}\log z \left(1-\frac{2}{\log^2 z}\right).
\]
Again, using partial summation and the prime number theorem, we get
\begin{align*}
A(z)=\exp\left((1+O(1/\log z))\int_2^z\log\left(1- \frac{(1-r_t)^2}{(t+1)(1+r_t^2)}+\left(1-\frac{1}{t}\right)\frac{(1-r_t^2)^2}{t(1+r_t^2)}\right)\frac{dt}{\log t}\right). 
\end{align*}
Here $r_t=1-t/z$. To evaluate the integral inside the exponential, we split the range of integration into two segments: one for
 $t\leq z^{\theta}$ and another for $t> z^{\theta}$, $0<\theta<1$. While $t\leq z^{\theta}$, the contribution to the integral is equal to 
\begin{align*}
&\int_{2}^{z^{\theta}}\log\left(1-\frac{t^2}{z^2(t+1)(2-2t/z+t^2/z^2)}+\left(1-1/t\right)\frac{t(2-t/z)^2}{z^2(2-2t/z+t^2/z^2)}\right)\frac{dt}{\log t}\\
&= (1+o(1))\frac{1}{z^2}\int_2^{z^{\theta}}\frac{tdt}{\log t}\ll \frac{1}{z^{2(1-\theta)}\log z},
\end{align*}
which is $O(1/(\log z)^2)$ for any $\theta<1$. The main contribution comes from the segment $t>z^{\theta}$, in which case, under change of variable $t=zu$, the integral transform into
\begin{align*}
&\frac{z}{\log z}\int_{z^{\theta-1}}^{1}\log\left(1-\frac{u^2}{(zu+1)(2-2u+u^2)}+\left(1-\frac{1}{zu}\right)\frac{u(2-u)^2}{z(2-2u+u^2)}\right)\frac{du}{1+\frac{\log u}{\log z}}\\
&=(1+O(1/\log z))\frac{z}{\log z}\int_{z^{\theta-1}}^{1} \log\left(1-\frac{u}{z(2-2u+u^2)}+\frac{u(2-u)^2}{z(2-2u+u^2)}\right)du\\
&= (c_2-c_3+O(1/\log z))\frac{1}{\log z},
\end{align*}
since $zu+1=zu\left(1+O(z^{-\theta})\right)$ and $\log (1+x)=x+O(x^2)$ for $x<1$, where 
\begin{align}\label{c3}
c_2=\int_0^1\frac{u(2-u)^2}{2-2u+u^2}du, \quad \text{ and } \quad c_3=\int_0^1\frac{u}{2-2u+u^2}du= \frac{\pi}{4}-\frac{\log 2}{2}\approx 0.43882.
\end{align}
Therefore, we deduce that
\[
A(z)=\exp\left((c_2-c_3+O(1/\log z))\frac{1}{\log z}\right).
\]
Following the above arguments, we have
\begin{align*}
B(z)\prod_{p<z}h(p)^{-1}=\prod_{p<z}\left(1+\frac{(1-r_p^2)^2}{p(1+r_p^2)}\right)
=\exp\left((c_2+O(1/\log z))\frac{1}{\log z}\right).
\end{align*}
Therefore, plugging these computations in \eqref{ratio} with $z=\frac{1}{c}\log X \log_2 X$, we obtain
\begin{align}\label{ratio final}
\frac{S_1^*}{S_2^*}\geq e^{\gamma}\left(\log_2 X+\log_3 X-c_3-\log c+O\left(\frac{1}{\log_2 X}\right)\right),
\end{align}
where $c_3$ is given by \eqref{c3}.
Also, from \eqref{optimzed constant}, we can choose $c$ in such a way that 
\begin{align}\label{constant c}
\log c=\log(2(3\log 2-\pi/2))+5\eta-\frac{1}{\sqrt{\log_2 X}}.
\end{align}

The additional factor $-\frac{1}{\sqrt{\log_2 X}}$ in the choice of $c$ is crucial for proving Theorem \ref{main theorem 2a}. We define it this way to maintain uniformity, although any factor of the form $-\frac{1}{(\log_2 X)^{1-\ve}}$ for any $\varepsilon>0$ would work.
Hence, using Lemma \ref{short Euler product} $($see eq. \eqref{asym for L(1)}$)$, we conclude that there exists a fundamental discriminant $d$ with $|d|\leq X$ such that
\[
|L(1,\chi_d)|\ge e^{\gamma}\left(\log_2 X +\log_3 X-C_2-5\eta+o(1)\right),
\]
where 
\[C_2=c_3+\log \left(2\left(3\log 2-\pi/2\right)\right)=\frac{\pi}{4}-\frac{\log 2}{2}+\log \left(2\left(3\log 2-\pi/2\right)\right).\]
 
This completes the proof by choosing $\eta>0$ sufficiently small.

\section{Proof of Theorem \ref{main theorem 2a}}
We will prove the theorem using the estimates derived in the proof of Theorem \ref{main theorem 2}.

 Using \eqref{ratio final}, \eqref{constant c} and Lemma \ref{short Euler product}, we have 
\begin{align*}
\left(\tau_{\eta, X}+\frac{1}{2\sqrt{\log_2 X}}\right)\sideset{}{^\flat}\sum_{\substack{|d|\le X\\ d\notin \mcal{E}}}R_d^2&\leq \bigg|\sideset{}{^\flat}\sum_{\substack{|d|\le X\\ d\notin \mcal{E}}}L(1, \chi_d)R_d^2\bigg|\\
&\leq \bigg|\sideset{}{^\flat}\sum_{\substack{|d|\le X;\, d\notin \mcal{E}\\ |L(1, \chi_d)|\leq \tau_{\eta, X}}}L(1, \chi_d)R_d^2\bigg|+\bigg|\sideset{}{^\flat}\sum_{\substack{|d|\le X;\, d\notin \mcal{E}\\ |L(1, \chi_d)|> \tau_{\eta, X}}}L(1, \chi_d)R_d^2\bigg|\\
&\leq \tau_{\eta, X} \sideset{}{^\flat}\sum_{\substack{|d|\le X\\ d\notin \mcal{E}}}R_d^2+ \bigg|\sideset{}{^\flat}\sum_{\substack{|d|\le X;\, d\notin \mcal{E}\\ |L(1, \chi_d)|> \tau_{\eta, X}}}L(1, \chi_d)R_d^2\bigg|.
\end{align*}
From the above computation of $S_2$ together with \eqref{scx} gives us
\[
\bigg|\sideset{}{^\flat}\sum_{\substack{|d|\le X;\, d\notin \mcal{E}\\ |L(1, \chi_d)|> \tau_{\eta, X}}}L(1, \chi_d)R_d^2\bigg|\geq \frac{e^{\gamma}}{2 \sqrt{\log_2 X}}\sideset{}{^\flat}\sum_{\substack{|d|\le X\\ d\notin \mcal{E}}}R_d^2\geq \frac{e^{\gamma}}{2\zeta(2)}\frac{X^{\left(1+O(1/\log_2 X)\right)\left(1+\frac{2(2-\log 4)+c'}{c}\right)}}{\sqrt{\log_2 X}}.
\]
On the other hand, using a result of Littlewood \cite[Theorem 1]{Little} under GRH that
\[
\max_{|d|\leq X}|L(1, \chi_d)|\leq \left(2 + o(1)\right)e^{\gamma}\log_2 X,
\]
we have from \eqref{RD},
\begin{align*}
\bigg|\sideset{}{^\flat}\sum_{\substack{|d|\le X;\, d\notin \mcal{E}\\ |L(1, \chi_d)|> \tau_{\eta, X}}}L(1, \chi_d)R_d^2\bigg|&\leq \max_{|d|\leq X} R_d^2 \max_{|d|\leq X}|L(1, \chi_d)|\frac{X}{\zeta(2)}\Phi_X(\eta)\\
&\leq 2e^{\gamma}\frac{X}{\zeta(2)}\Phi_X(\eta) X^{\frac{2\left(1+O\left(1/\log_2 X\right)\right)}{c}} \log_2 X.
\end{align*}
Comparing the above lower and upper bound, and using $c$ as expressed in \eqref{constant c}, we finally conclude that
\[
\Phi_X(\eta)\geq \frac{1}{4(\log_2 X)^{3/2}}X^{-\frac{(3\log 2-\pi/2)}{c}\left(1+O\left(1/\log_2 X\right)\right)}\geq X^{-\frac{e^{-5\eta}\left(1+O\left(1/\sqrt{\log_2 X}\right)\right)}{2}},
\]
which finishes the proof.

\section{Proof of Theorem \ref{main theorem 3}}
Since the GRH is assumed, then equation  \eqref{approxi of logL(sigma)} with $t = 0$, $y = (\log X)^{4/(\sigma-1/2)}$ and $\sigma_0 = \sigma/2 + 1/4 > 1/2$, we obtain
\begin{align}\label{initial log expression}
\log L(\sigma,\chi_d)= \sum_{n=2}^{y}\frac{\Lambda(n)\chi_d(n)}{n^{\sigma}\log n} +O\left(\frac{1}{(\log X)^{2}}\right).
\end{align} 
We assume that $1/2<\sigma<1$ is fixed, and set $y = (\log X)^{4/(\sigma-1/2)}$.
Since$$\sum_{k\ge 2}\sum_{n=p^k}^{y}\frac{\Lambda(n)\chi_d(n)}{n^{\sigma}}\ll 1,$$
it follows that 
\begin{align}\label{log expression}
\log L(\sigma,\chi_d)=S_{\chi_d}(\sigma, y) +O(1), 
\end{align}
where $S_{\chi_d}(\sigma, y)=\sum_{p\le y}\frac{\chi_d(p)}{p^{\sigma}}$. Consequently, to prove the theorem, it suffices to exhibit large values of $S_{\chi_d}(\sigma, y)$. To achieve this, we employ a long resonator technique to approximate $S_{\chi_d}(\sigma, y)$, similar to the approach taken in the previous section.

We set $Y=a\log X \log_2 X$, where $a>0$ to be optimized later. Furthermore, we set $r_1 =1 $ and $r_p =0$ for $p>Y$, and $r_p = b$ for all primes $p\le Y$, where $0<b<1$ is fixed. We extend this in a completely multiplicative manner to obtain weights $r_n$ for all $n \ge 1$. Define \begin{align*}
R_{d}=\prod_{p\le Y} \left(1-r_p \chi_{d}(p)\right)^{-1}=\sum_{n=1}^{\infty} r_n \chi_d (n).
\end{align*}
Similar to the previous sections, we consider the sums
\begin{align*}	
S_1(\sigma) :=\sideset{}{^\flat} \sum_{\substack{|d|\le X}} S_{\chi_d}(\sigma, y) R_{d}^{2} 
\quad\quad \text{     and   }\quad\quad S_2(\sigma)  := \sideset{}{^\flat}\sum_{\substack{|d|\le X}} R_{d}^{2}. 
\end{align*}

Therefore 
\begin{align}\label{max schid}
\max_{|d|\le X}|S_{\chi_d}(\sigma, y)|\ge \frac{|S_1 (\sigma)|}{S_2 (\sigma)}.
\end{align}

Applying Lemma \ref{character sum estimate}, we have 
\begin{align*}
S_1 (\sigma) &=\sideset{}{^\flat}\sum_{\substack{|d|\le X}} S_{\chi_d}(\sigma, y) R_{d}^{2}=\sum_{m,n}r_m r_n \sum_{p\le y}\frac{1}{p^{\sigma}}\sideset{}{^\flat}\sum_{\substack{|d|\le X}}\chi_{d}(mnp)\\
&= \sum_{p\le y}\sum_{\substack{m,n \\ mnp=\square } }\frac{ r_m r_n}{p^{\sigma}}\Bigg(  \frac{X}{\zeta(2)}\prod_{\substack{q\,prime\\q\mid mnp}} \left(\frac{q}{q+1}\right)  + O\left(X^{\frac12+\frac{\ve}{3}} g(k) \right)\Bigg)\\
& +O\Bigg( X^{\frac{1}{2} + \frac{\ve}{3}} \sum_{p\le y}\frac{ 1}{p^{\sigma}} \sum_{\substack{m,n\ge 1\\mnp=s_0 s_1^2\\ \mu(s_0 )\ne 0}} r_m r_n f(s_0)g(s_1) \Bigg).
\end{align*}  
The previous section shows that it is enough to prove the second error term, where we use the bounds on $f$ and $g$. Since $g(s_1)\le g(s_0 s_1^2)\le g(m)g(n)g(p)$ and $f(s_0)\ll X^{\ve/3}$, the  second error term is bounded by
 \begin{align*}
\ll X^{1/2+2\varepsilon/3} \left(\sum_{p\le y }\frac{ g(p)}{p^{\sigma}}\right) \left(\sum_{\substack{m\ge 1}}r_m g(m)\right)^2.
\end{align*}
Note that, $r_m$ and $g(m)$ are completely multiplicative and multiplicative functions with $r_p =b$ and $g(p^l)=g(p)=1+\frac{1}{p^{1/2 +\ve}}$ for $l\ge 1$ respectively. Again, using the prime number theorem and the choice of $Y$, we have
\begin{align*}
\sum_{\substack{m\ge 1}}r_m g(m)&=\prod_{p\le Y}\left( 1+ r_p g(p) +r^2_p g(p) +r^3_p g(p)+\ldots \right)\\
&=\prod_{p\le Y}\left( 1+ \frac{r_p g(p)}{ 1- r_p} \right)
=\prod_{p\le Y}\left(\frac{1}{1-b}+\frac{b}{(1-b)p^{1/2 +\ve}} \right)\\
&= \exp\left(-(\log (1-b)+O(1/\log Y)) \frac{Y}{\log Y} \right)=X^{-a\log (1-b)(1+O(1/\log_2 X))}.
\end{align*}
Also, for $1/2<\sigma<1$,
\begin{align*}
\sum_{p\le y }\frac{ g(p)}{p^{\sigma}}=\sum_{p\le y }\frac{ 1}{p^{\sigma}} +\sum_{p\le y }\frac{ 1}{p^{\sigma+1/2+\ve}}=(1+o(1))\frac{y^{1-\sigma}}{(1-\sigma)\log y}\ll X^{\ve/3}.
\end{align*}
Therefore, both the error terms in $S_1(\sigma)$ are $O\left(X^{\frac12-2a\log (1-b)+\varepsilon}\right).$
The condition $mnp=\square$ forces $p\leq Y$. Using positivity of the coefficients $r_m$, and collecting those primes $p$ which divide $n$,
\begin{align*}
 \frac{X}{\zeta(2)} \sum_{p\le y}\sum_{\substack{m,n \\ mnp=\square}}\frac{ r_m r_n}{p^{\sigma}} \prod_{\substack{q\,prime\\q\mid mnp}} \left(\frac{q}{q+1}\right) &\geq  \frac{X}{\zeta(2)}\sum_{p\le Y}\sum_{\substack{m,n \\ mnp=\square\\ p\mid n}}\frac{ r_m r_n}{p^{\sigma}} \prod_{\substack{q\,prime\\q\mid mn}} \left(\frac{q}{q+1}\right)\\
 &\geq \frac{X}{\zeta(2)} \left(\sum_{p\le Y}\frac{ r_p}{p^{\sigma}}\right)\Bigg(\sum_{\substack{m, l\geq 1\\ml=\square}} r_m r_l \prod_{\substack{q\,prime\\q\mid ml}} \left(\frac{q}{q+1}\right)\Bigg),
\end{align*}
where we substitute $n=pl$.
Again, the prime number theorem and the choice of  $Y=a \log X \log_2 X$ yield
\begin{align*}\sum_{p\le Y}\frac{ r_p} {p^{\sigma}}&=\sum_{p\le Y}\frac{ b}{p^{\sigma}}=(1+O(1/\log Y))\frac{b Y^{1-\sigma}} {(1-\sigma)\log Y}\\
&=\frac{ba^{1-\sigma}}{1-\sigma}(1+O(1/\log_2 X))\left( \log X\right)^{1-\sigma}\left( \log_2 X\right)^{-\sigma}.
\end{align*}
Therefore, 
\begin{align*}
S_1 (\sigma)=\frac{ba^{1-\sigma}(1+O(1/\log_2 X))}{1-\sigma}\Bigg(\sum_{\substack{m, l\geq 1\\ml=\square}} r_m r_l h(ml)\Bigg)\frac{X}{\zeta(2)}\left( \log X\right)^{1-\sigma}\left(\log_2 X\right)^{-\sigma}\\
+ O\left(X^{\frac12-2a\log (1-b)+\varepsilon} \right).
\end{align*}
Similarly, again using Lemma \ref{character sum estimate} and following the estimate of $S_1(\sigma)$, we get 
\begin{align*}
S_2 (\sigma) &=\sideset{}{^\flat}\sum_{\substack{|d|\le X}}  R_{d}^{2}=\sum_{m,n}r_m r_n \sideset{}{^\flat}\sum_{\substack{|d|\le X}}\chi_{d}(mn)\\
&= \frac{X}{\zeta(2)}\sum_{\substack{m,n \\ mn=\square}} r_m r_n h(mn)
 +O\Bigg( X^{\frac{1}{ 2} + \ve}\sum_{\substack{m,n\ge 1\\mn=s_0 s_1^2\\ \mu(s_0 )\ne 0}} r_m r_n f(s_0)g(s_1) \Bigg)\\
&= \frac{X}{\zeta(2)}  \sum_{\substack{m,n \\ mn=\square}} r_m r_n h(mn) + O\left(X^{\frac12-2a\log (1-b) +\varepsilon} \right).
\end{align*}
The resonator sum can be evaluated as
\begin{align}\label{resonator length for sigma}
 \sum_{\substack{m,n \\ mn=\square}} r_m r_n h(mn)&=\sum_{\substack{k\ge 1} } r^2_k\, d(k^2) h(k)=\prod_{p\le Y}\left(1-h(p)+h(p)(1-r_p^2)^{-2}(1+r_p^2)\right)\nonumber\\
&=\prod_{p\le Y}\left(\frac{1+b^2}{(1-b^2)^2}\right)\left(1- \frac{1}{p+1}+\frac{(1-b^2)^2}{(1+b^2)(p+1)}\right)\nonumber\\
&= \exp\left(\theta(b)\left(1+O(1/\log Y)\right)\frac{Y}{\log Y}\right)
=X^{a\theta(b)\left(1+O(1/\log_2 X)\right)},
\end{align}
where $\theta(b):= \log \left(\frac{1+b^2}{(1-b^2)^2}\right)$. 

We evaluated this resonator sum to find an optimal value of $a$ as below.
Combining these estimates and comparing the error terms, we finally conclude
\begin{align}\label{s_1/s_2}
\frac{S_1(\sigma)}{S_2(\sigma)}\geq \frac{ba^{1-\sigma}}{1-\sigma}(1+O(1/\log_2 X))\left( \log X\right)^{1-\sigma}\left( \log_2 X\right)^{-\sigma},
\end{align}
provided that
\[
\frac12-2a\log (1-b) < 1+a\,\theta(b) \iff a<\left(2 \log \left(\frac{(1+b)^2}{1+b^2}\right)\right)^{-1}.
\]
By choosing $a=\left(2\log \left(\frac{(1+b)^2}{1+b^2}\right)\right)^{-1}- \eta$ with $\eta>0$ arbitrary small, from \eqref{log expression}, \eqref{max schid} and \eqref{s_1/s_2}, we complete the proof.

Here we must mention that following the same path as of the Theorem \ref{main theorem 2} counting $mnp=\square$ to the sum $S_1(\sigma)$, one can obtain $\frac{2b}{1+b^2}$ instead of $b$ in \eqref{s_1/s_2}. However, while $b$ is close to $1$, both are identical.

\section{Proof of Theorem \ref{main theorem 3a}}
We will employ the strategy to prove Theorem \ref{main theorem 2a} and utilize the estimates derived in the proof of Theorem \ref{main theorem 3}. 

Recall that \[\tau_{\eta, X}=\alpha(\sigma, b)\left(1-\eta \alpha(b)\right)^{1-\sigma}(\log X)^{1-\sigma}(\log_2 X)^{-\sigma},\] where
\[
\alpha(b)=2\log \left(\frac{(1+b)^2}{1+b^2}\right), \quad \alpha(\sigma, b)=\frac{b}{1-\sigma}\left(\alpha(b)\right)^{\sigma-1}.\]
Let us consider $a=1/\alpha(b)-\eta+1/\sqrt{\log_2 X}$ with $\eta >1/\sqrt{\log_2 X}$.
This yields \begin{align*}
a^{1-\sigma}&=\left(\alpha(b)-\eta+\frac{1}{\sqrt{\log_2 X}}\right)^{1-\sigma}=\alpha^{\sigma-1}(b)\left(1-\eta \alpha(b)\right)^{1-\sigma}\left(1+\frac{\alpha(b)}{(1-\eta \alpha(b))\sqrt{\log_2 X}}\right)^{1-\sigma}\\
&\geq \alpha^{\sigma-1}(b)\left(1-\eta \alpha(b)\right)^{1-\sigma}\left(1+\frac{(1-\sigma)\alpha(b)}{2(1-\eta \alpha(b))\sqrt{\log_2 X}}\right).
\end{align*}
From \eqref{log expression} and \eqref{s_1/s_2}, we obtain
\begin{align*}
&\left(\tau_{\eta, X}+\frac{b\alpha^{\sigma}(b)(\log X)^{1-\sigma}}{2(1-\eta \alpha(b))^{\sigma}(\log_2 X)^{1/2+\sigma}}\right)\sideset{}{^\flat}\sum_{\substack{|d|\le X}}R_d^2\leq \bigg|\sideset{}{^\flat}\sum_{\substack{|d|\le X}}\log L(\sigma, \chi_d)R_d^2\bigg|\\
&\leq \bigg|\sideset{}{^\flat}\sum_{\substack{|d|\le X\\ \log |L(\sigma, \chi_d)|\leq \tau_{\eta, X}}}\log L(\sigma, \chi_d)R_d^2\bigg|+\bigg|\sideset{}{^\flat}\sum_{\substack{|d|\le X\\ \log |L(\sigma, \chi_d)|> \tau_{\eta, X}}}\log L(\sigma, \chi_d)R_d^2\bigg|\\
&\leq \tau_{\eta, X} \sideset{}{^\flat}\sum_{\substack{|d|\le X}}R_d^2+ \bigg|\sideset{}{^\flat}\sum_{\substack{|d|\le X\\ \log |L(\sigma, \chi_d)|> \tau_{\eta, X}}}\log L(\sigma, \chi_d)R_d^2\bigg|.
\end{align*}
From the computation of $S_2(\sigma)$ in the proof of Theorem \ref{main theorem 3} together with \eqref{resonator length for sigma} gives us
\begin{align*}
\bigg|\sideset{}{^\flat}\sum_{\substack{|d|\le X\\ \log |L(\sigma, \chi_d)|> \tau_{\eta, X}}}\log L(\sigma, \chi_d)R_d^2\bigg|&\geq \frac{b\alpha^{\sigma}(b)(\log X)^{1-\sigma}}{2(1-\eta \alpha(b))^{\sigma}(\log_2 X)^{1/2+\sigma}} \sideset{}{^\flat}\sum_{\substack{|d|\le X}}R_d^2\\
&\geq X^{\left(1+O(1/\log_2 X)\right)\left(1+\theta(b)\right)}.
\end{align*}
On the other hand, from \eqref{initial log expression}, we see that
\[
\max_{|d|\leq X}\log |L(\sigma, \chi_d)|\ll (\log X)^{\frac{8(1-\sigma)}{2\sigma-1}}/\log_2 X.
\]
This bound yields 
\begin{align*}
&\bigg|\sideset{}{^\flat}\sum_{\substack{|d|\le X\\ \log |L(\sigma, \chi_d)|> \tau_{\eta, X}}}\log L(\sigma, \chi_d)R_d^2\bigg|\leq \max_{|d|\leq X} R_d^2 \max_{|d|\leq X}\log |L(\sigma, \chi_d)|\frac{X}{\zeta(2)}\Phi_X(\sigma, \eta)\\
&\leq 2e^{\gamma}\frac{X}{\zeta(2)}\Phi_X(\sigma, \eta)  X^{-2a\log(1-b)(1+O(1/\log_2X))}(\log X)^{\frac{8(1-\sigma)}{2\sigma-1}}/\log_2 X,
\end{align*}
where we use that $
|R_d|\leq X^{-a\log(1-b)(1+O(1/\log_2X))}.
 $ 

Comparing the above lower and upper bound, and using the choice of $a$, we finally conclude 
\[
\Phi_X(\sigma, \eta)\geq X^{-\frac12 \left(1-\eta \alpha(b)\right)\left(1+O\left(1/\sqrt{\log_2 X}\right)\right)},
\]
which finishes the proof.

  \end{document}